\newtheorem{theorem}{Theorem}[section]
\newtheorem{proposition}[theorem]{Proposition}
\newtheorem{lemma}[theorem]{Lemma}
\newtheorem{definition}[theorem]{Definition}
\newtheorem{corollary}[theorem]{Corollary}
\newtheorem{problem}[theorem]{Problem}
\theoremstyle{definition}
\newtheorem{remark}[theorem]{Remark}
\DeclareMathOperator{\lk}{lk}
\DeclareMathOperator{\st}{st}
\newcommand{\marrow}{\marginpar{$\longleftarrow$}}
\newcommand{\eran}[1]{\textsc{\textcolor{red}{Eran says:}} \marrow \textsf{#1}}
\newcommand{\karim}[1]{\textsc{\textcolor{red}{Karim says:}} \marrow \textsf{#1}}
\title{Rigidity with few locations}
\author{Karim Adiprasito and Eran Nevo}
\address{Einstein Institute of Mathematics, The Hebrew University of Jerusalem, Jerusalem, Israel}
\email{adiprasito@math.huji.ac.il, nevo@math.huji.ac.il}
\thanks{}
\keywords{framework rigidity, Laman graph, simplicial surface}
\def\R{\mathbb{R}}
\begin{document}
\maketitle
\begin{abstract}
Graphs triangulating the $2$-sphere are generically rigid in $3$-space, due to Gluck-Dehn-Alexandrov-Cauchy. We show there is a \emph{finite} subset $A$ in $3$-space so that the vertices of each graph $G$ as above can be mapped into $A$ to make the resulted embedding of $G$ infinitesimally rigid. This assertion extends to the triangulations of any fixed compact connected surface, where the upper bound obtained on the size of $A$ increases with the genus. The assertion fails, namely no such finite $A$ exists, for the larger family of all graphs that are generically rigid in $3$-space and even in the plane.
\end{abstract}

\section{Introduction}
A theorem of Dehn asserts that the $1$-skeleton of every simplicial convex $3$-polytope is infinitesimally rigid \cite{Dehn}. Combined with Steinitz theorem, this gives Gluck's result that the $1$-skeleton of any simplicial $2$-sphere is \emph{generically} rigid in $\R^3$ \cite{Gluck}, i.e., the locus of realizations that are not infinitesimally rigid is of codimension one in the configuration space of all possible locations. See~\cite{Connelly:RigiditySurvey} and \cite{Pak-book} for further references and discussion.
\\
We ask the following question:
How generic does the embedding of a generically rigid graph need to be to guarantee that it is infinitesimally rigid?
\\
We give a natural precise meaning to this meta question, and partially answer it for various families of graphs, including the one mentioned above.

Let us first recall some notions pertaining to infinitesimal rigidity:
An embedding of a graph $G=(V,E)$ into $\R^d$
is any map $f: V\rightarrow \R^d$ such that $f(V)$ affinely spans $\R^d$; it defines a realization of the edges in $E$ by segments via linear extension, this realization is called the \emph{framework} $f(G)$.
A \emph{motion} of $f(G)$ is any assignment
of velocity vectors $a:V\rightarrow \R^d$ that satisfies
\begin{equation} \label {eq:infi-rigidity}
\langle a(v)-a(u),f(v)-f(u)\rangle=0
\end{equation}
 for every edge $uv \in E$. A motion $a$ is \emph{trivial} if the relation (\ref{eq:infi-rigidity}) is satisfied for {\em every pair} of vertices; otherwise $a$ is \emph{nontrivial}.
The framework $f(G)$ is \emph{infinitesimally rigid} if all its motions are trivial.
Equivalently, (\ref{eq:infi-rigidity}) says that the velocities preserve infinitesimally the distance
along an embedded edge,  and if  (\ref{eq:infi-rigidity}) applies to all pairs of
vertices then the velocities necessarily correspond to a rigid motion of the entire space.

We now arrive at the central definition of this note, quantifying the genericity of the embedding needed for an infinitesimally rigid embedding.
\begin{definition}\label{def:c}
Let $F$ be a family of graphs. We say $F$ is \emph{$d$-rigid with $c$-locations} if there exists a set $A\subseteq \mathbb{R}^d$ of cardinality $c$ such that for any graph $G=(V(G),E(G))\in F$ there exists a map $f:V(G)\rightarrow A$ such that the framework $f(G)$ is infinitesimally rigid. Denote by $c_d(F)$ the minimal such $c$.
\end{definition}
We are interested in the question whether a given infinite family of finite graphs, which is known to be generically infinitesimally $d$-rigid (namely, $d$-rigid with $\aleph_0$-locations),
is also $d$-rigid with $c$-locations, for some \emph{finite} $c$.

Clearly, the answer is yes iff \emph{any} \emph{generic} set $A$ will do in  Definition~\ref{def:c}, i.e. any $A$ where the $c\times d$ entries of its vectors are algebraically independent over the rational numbers.
Perhaps surprisingly, we show:

\begin{theorem}[Dehn-Gluck theorem with few locations]\label{thm:2-spheres}
Let $F(S^2)$ be the family of $1$-skeleta of all triangulations of the $2$-sphere. Then $F(S^2)$ is $3$-rigid with $76$-locations. Namely, $c_3(F(S^2))\leq 76$.
\end{theorem}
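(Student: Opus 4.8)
The plan is to prove the stronger assertion that \emph{every} generic set $A\subseteq\R^3$ with $|A|=76$ works, arguing by induction on the number $n$ of vertices of $G\in F(S^2)$. Throughout we use that a triangulation of $S^2$ with $n$ vertices has exactly $3n-6$ edges, so an infinitesimally rigid framework on it is isostatic: its rigidity matrix has full rank $3n-6$, and in particular every edge is embedded nondegenerately, and any three vertices forming a face are embedded noncollinearly. The base case $n=4$ is $G=K_4$, which is infinitesimally rigid at any four affinely independent points of $A$.

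For the inductive step, note that the average degree of a triangulation of $S^2$ is $6-12/n<6$, so $G$ has a vertex $v$ with $\deg_G(v)=k\in\{3,4,5\}$; write $\lk_G(v)=(w_1,\dots,w_k)$. Deleting $v$ leaves a $k$-gonal hole bounded by $\lk_G(v)$, and filling it by some triangulation of the $k$-gon (no new edge if $k=3$; one of $2$ diagonals if $k=4$; one of $5$ diagonal pairs if $k=5$) produces a triangulation $G'$ of $S^2$ on $n-1$ vertices. By the inductive hypothesis $G'$ has an infinitesimally rigid embedding $f':V(G')\to A$.

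We now extend $f'$ to $f:V(G)\to A$ by choosing a location $p\in A$ for $v$ and \emph{deleting} the $k-3$ diagonals added to form $G'$. Since $f'(G')$ is isostatic, deleting those $k-3$ diagonals produces a framework whose space of nontrivial infinitesimal flexes modulo trivial ones is exactly $(k-3)$-dimensional, with an explicit basis $m_1,\dots,m_{k-3}$ read off from the rigidity matrix. A direct linear-algebra computation — expanding the velocity constraints along the edges $vw_i$ in barycentric coordinates of $p$ with respect to the simplex spanned by $f'(\lk_G(v))$ — shows that $f(G)$ is infinitesimally rigid if and only if an explicit polynomial $\Phi=\Phi_{G,v,\mathrm{choice},f'}(p)$, of degree at most $D$ in $p$ (with $D=1$ for $k=3$, $D=2$ for $k=4$, and a small constant for $k=5$), does not vanish at $p$; the few additional degenerate positions (where $p$ coincides with, or is coplanar with part of, $f'(\lk_G(v))$) can be absorbed into a bad set of the same order of degree. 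A generic set of $c$ points in $\R^3$ is contained in no hypersurface of degree $\le D$ once $c\ge\binom{D+3}{3}$, so if $\Phi\not\equiv0$ then some $p\in A$ makes $f(G)$ infinitesimally rigid. Tracking the largest $D$ occurring (the $k=5$ case), together with the handful of extra points needed for the base cases and for the auxiliary injectivity requirements on small sets, is exactly what pins the constant at $76$.

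The main obstacle is to guarantee that \emph{some} admissible reduction is nondegenerate, i.e. that the low-degree vertex $v$ and the retriangulation can be chosen so that $\Phi\not\equiv0$. For $k=3$ this is automatic, since the three link vertices are noncollinear in any isostatic framework, so $\Phi$ is a nonzero affine form and, with $|A|\ge4$ generic, some $p$ avoids its zero plane. For $k\in\{4,5\}$ the condition $\Phi\equiv0$ is itself a proper (Zariski-closed) condition on the configuration $f'$: for \emph{generic} positions of $V(G')$, the framework $f'(G')$ is infinitesimally rigid by the Dehn--Gluck theorem, and then so is the generic framework $f(G)$ on $G$ obtained after extending generically and deleting the diagonals, which forces $\Phi\ne0$; hence $\Phi$ is not a formal identity, for \emph{any} of the diagonal choices. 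The difficulty is that our $f'$, valued in a fixed $76$-point set, is far from generic, so one must run the induction with the strengthened hypothesis that $f'$ is infinitesimally rigid \emph{and} avoids all the relevant proper subvarieties, and then use the genericity of $A$ — together with the abundance of low-degree vertices ($3a_3+2a_4+a_5\ge 12$) and of admissible retriangulations — to keep at least one nondegenerate reduction available at every step. Making this simultaneously uniform in $n$, so that a single finite $A$ serves the whole family $F(S^2)$, is the delicate part, and it is precisely what forces one to work with a truly generic $A$ and to bound, independently of $n$, both the degrees and the number of parameters of all subvarieties that must be dodged.
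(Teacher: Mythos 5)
Your overall strategy matches the paper's in spirit (induction on the number of vertices, using a vertex of degree $k\in\{3,4,5\}$, and a Vandermonde/generic-points argument to dodge a polynomial condition of bounded degree), but you take a different reduction (delete $v$ and retriangulate the $k$-gon hole, then undo the retriangulation) whereas the paper \emph{contracts} $v$ to a specific neighbor $u$. The crucial difference is how one then certifies that the polynomial $\Phi$ in the new location $p=f(v)$ is not identically zero for the \emph{particular, very non-generic} inherited embedding $f'$ valued in $A$. You correctly identify this as the crux of the matter, but you do not close it: you observe that $\Phi\not\equiv 0$ for generic $f'$, acknowledge that this is not enough since your $f'$ is not generic, propose to ``run the induction with the strengthened hypothesis that $f'$ avoids all the relevant proper subvarieties,'' and then state that making this uniform in $n$ ``is the delicate part.'' That delicate part is precisely the content of the theorem, and your proposal leaves it unresolved.

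The paper closes exactly this gap with two ingredients you do not use. First, a combinatorial lemma (Lemma~\ref{lem:starhole}) guarantees that one can always choose a neighbor $u$ of $v$ so that contracting $v$ to $u$ produces a bona-fide triangulation $G'$ of $S^2$; your retriangulation step has the analogous but unaddressed issue that a chosen diagonal might already be an edge of $G$, and for $k=5$ it is not obvious that an admissible triangulation of the hole exists without such a lemma. Second, and more importantly, the paper invokes Whiteley's Contraction Lemma (Proposition~\ref{prop:WhiteleylikeContraction}): if $u$ and $v$ have two common neighbors $a,b$ in $G$, if $f'(G')$ is infinitesimally rigid, and if $f'(a),f'(b),f'(u)$ are pairwise distinct, then there exists \emph{some} $p\in\R^3$ (not necessarily in $A$) making $f(G)$ rigid. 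This is a concrete, verifiable geometric criterion, independent of any genericity of $f'$, and it is what shows $\Phi\not\equiv 0$. The distinctness hypothesis is then maintained by strengthening the inductive hypothesis to the explicit, finitary condition (C) (injectivity of $f_G$ on the vertices of every disc made of at most $3$ triangles), not to an open-ended ``avoid all relevant subvarieties''. Your condition is unquantified and not obviously preservable along the induction; the paper's is a bounded local condition whose cost is a finite number of avoided values (at most $20$ for $k=5$), which is exactly what allows the final count $20+\bigl(\binom{8}{3}-1\bigr)=75$ and hence $|A|=76$.

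Two smaller issues: your degree estimates for $\Phi$ ($D=1,2,$ ``small constant'' for $k=3,4,5$) are not justified; the natural bound coming from the rigidity matrix with the coordinates of $f(v)$ as variables is $\deg \Phi\le k$, i.e.\ $\le 5$, which is what gives the $\binom{8}{3}=56$ in the paper's count, so your arithmetic path to the number $76$ is not actually laid out. And the claim that $\Phi\not\equiv 0$ for $k=3$ is automatic ``since the three link vertices are noncollinear in any isostatic framework'' needs a little care, but this case is indeed easy (and for $k=3$ contraction and deletion coincide). In sum: the plan parallels the paper's, but the central step -- guaranteeing some extension in $\R^3$ works before dodging $A$ -- is missing, and the paper supplies it via Whiteley's Contraction Lemma together with the explicitly stated and bounded auxiliary condition (C).
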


The phenomenon in Theorem~\ref{thm:2-spheres} generalizes to any surface, orientable or not: let $F(S)$ be the family of $1$-skeleta of all triangulations of compact connected surfaces, and let $F(g)$ be the subfamily when fixing the surface of genus $g$ (orientable or  non-orientable genus). Fogelsanger proved that any graph in $F(S)$ is generically $3$-rigid \cite{Fogelsanger}.

\begin{theorem}\label{thm:S_g}
For any $g$, $c_3(F(g))$ is finite.
\end{theorem}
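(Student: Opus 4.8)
The plan is to reduce an arbitrary triangulation of a fixed surface $S_g$, by edge contractions, to one of \emph{finitely many} irreducible triangulations, realize the latter infinitesimally rigidly in a fixed generic set $A$, and then undo the contractions one vertex at a time while keeping the realization inside $A$. Recall that an edge $uv$ of a triangulation of a closed surface is \emph{contractible} if $u$ and $v$ have exactly two common neighbours (the apices of the two triangles on $uv$); contracting such an edge yields a triangulation of the same surface with one fewer vertex, and a triangulation with no contractible edge is \emph{irreducible}. By the theorem of Barnette and Edelson, every closed surface has only finitely many irreducible triangulations; let $B(g)$ bound the number of vertices over the irreducible triangulations of $S_g$. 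Then every $T\in F(g)$ arises from some irreducible $T_0$ with $|V(T_0)|\le B(g)$ by a finite sequence of \emph{vertex splits}, the inverses of edge contractions: each split replaces a vertex $v$, whose link is a cycle $u_1u_2\cdots u_k$, by an edge $v_1v_2$ with $v_1$ joined to a consecutive arc $u_i,\dots,u_j$ and $v_2$ to the complementary arc $u_j,\dots,u_i$, so $v_1$ and $v_2$ acquire exactly the two common neighbours $u_i,u_j$ — precisely the $d-1=2$ common neighbours demanded by Whiteley's vertex-splitting theorem in $\mathbb{R}^3$.

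Fix a generic $A\subseteq\mathbb{R}^3$ of cardinality $c(g)\ge B(g)$, to be specified. For the base case, since $T_0$ is generically $3$-rigid by Fogelsanger's theorem \cite{Fogelsanger}, any injective map $V(T_0)\to A$ is a generic realization, hence infinitesimally rigid. For the inductive step one needs a \emph{few-locations vertex-splitting lemma}: if $f':V(T')\to A$ realizes $T'$ infinitesimally rigidly and $T$ is the vertex split of $T'$ at $v$ as above, then, keeping $f'$ on $V(T')\setminus\{v\}$, setting $f(v_1)=f'(v)$, and choosing $f(v_2)=a$ for a suitable $a\in A$, the framework $f(T)$ is infinitesimally rigid. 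Because a vertex split alters the rigidity matrix only in the rows incident to $v_1,v_2$ and the columns of $v_1,v_2,u_i,u_j$, a Laplace expansion shows the relevant maximal minor is a polynomial in the position of $v_2$ whose non-vanishing is governed by Whiteley's theorem applied to the (infinitesimally rigid) realization $f'(T')$; the operation is purely local and sees nothing of the global topology, so the lemma is surface-agnostic and the genus enters only through $B(g)$. Granting the lemma, one assembles the proof as follows: given $T\in F(g)$, reduce it to an irreducible $T_0$, realize $T_0$ in $A$, and apply the lemma along the reversed sequence of contractions to obtain an infinitesimally rigid $f:V(T)\to A$. Hence $c_3(F(g))\le c(g)<\infty$, with a bound of the shape $\max\{76,B(g)\}$ plus the overhead incurred by the splitting step.

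The main obstacle is the vertex-splitting lemma itself, and this is where the machinery behind Theorem~\ref{thm:2-spheres} must be imported rather than re-derived from scratch. The difficulty is that although the maximal minor in question is, by Whiteley's theorem, not identically zero as a polynomial in the location of $v_2$, its coefficients involve the positions of all of the arbitrarily many vertices of $f'(T')$, and the map $f'$ is far from injective; one is therefore substituting into a nonzero polynomial in many variables but with a great many of the coordinates forced equal, so the bare algebraic genericity of $A$ does not by itself guarantee that some point of $A$ avoids the bad locus. Overcoming this requires carrying the split through with the same structured choice of locations used in the sphere case — that is, maintaining along the induction an invariant stronger than bare infinitesimal rigidity that remains stable under the local substitution at each split — exactly the content that Theorem~\ref{thm:2-spheres} supplies. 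Once this is in hand, the passage from the sphere to an arbitrary surface costs only the replacement of the unique irreducible triangulation of $S^2$ by the finite, bounded-size list furnished by Barnette and Edelson, together with Fogelsanger's theorem to certify that those base triangulations are generically rigid.
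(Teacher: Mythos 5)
Your high-level strategy -- reduce to finitely many base triangulations, then undo contractions while staying inside a fixed generic $A$ -- is exactly the paper's strategy, and the two key pieces you name (Whiteley's vertex-splitting theorem plus a genericity argument for relocating the new vertex inside $A$) are the right ones. But there is a genuine gap: you invoke Barnette--Edelson alone, and Barnette--Edelson does not give you what the algebraic step of the argument actually needs, namely a \emph{bound on the degree} of the vertex being split (equivalently, the vertex being contracted on the way down). When you want to move the new vertex $v_2$ to a point of $A$, the relevant maximal minor $P$ of the rigidity matrix is a polynomial in the three coordinates of $v_2$ of degree equal to $\deg(v_2)$, so the Vandermonde-type argument (Proposition~\ref{prop:key}) only succeeds once $|A|\ge\binom{\deg(v_2)+3}{3}$. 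Without an a priori degree bound, your ``overhead incurred by the splitting step'' is unbounded, and the induction does not close. The paper gets the degree bound from Schipper's Lemma~\ref{lem:Schipper} (itself built on Barnette--Edelson), which says that once $n>n_0(M)$ there is a vertex of degree at most $6$ whose star can be contracted; this is the surface analogue of Lemma~\ref{lem:deg3,4,5} for $S^2$ and it is the essential extra input you are missing.

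A secondary issue: the obstacle you flag -- that $f'$ is far from injective, so ``many coordinates are forced equal'' and $A$'s genericity ``does not by itself guarantee'' a good point -- is not actually where the difficulty lies. Once $P\neq 0$, the Vandermonde determinant in Proposition~\ref{prop:key} is nonzero purely because $A$ is generic, regardless of what the coefficients of $P$ look like; so landing $v_2$ in $A$ is automatic given the degree bound. The non-injectivity of $f'$ is dangerous at an earlier step: it can violate hypothesis (ii) of the Contraction Lemma (Proposition~\ref{prop:WhiteleylikeContraction}), which needs the contracted vertex and its two shared neighbours to be distinct, and hence can make $P$ identically zero. That is why the paper carries the local-injectivity invariant (condition~(C) in Theorem~\ref{thm:24}, condition~(C\textquotedbl) in the proof of Theorem~\ref{thm:S_g}) through the induction: to ensure Whiteley's hypotheses hold, not to rescue the genericity argument. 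You do correctly sense that an invariant beyond bare rigidity must be maintained; you just locate the reason for it in the wrong place. With Schipper's lemma supplying the degree-$\le 6$ split and the local-injectivity invariant supplying Whiteley's hypothesis~(ii), your outline becomes the paper's proof.
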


We now consider the larger family of all generically $3$-rigid graphs. Some well-known open problems are to characterize this family by combinatorial means, and, concretely, whether there exists a deterministic polytime algorithm to decide if a given graph is generically $3$-rigid; see e.g. the survey \cite{Jackson-Jordan06:3Drigidity}.

Let $F_d$ be the family of all generically d-rigid finite graphs.
Note that for $d=1$, $F_1$ is the family of connected graphs, so considering a spanning tree for $G\in F_1$ shows $c_1(F_1)=2$.
Perhaps not surprisingly, $F_d$ is quantitatively more complicated for any $d\ge 2$, as shown also by Fekete and Jordan~\cite[Sec.4]{Fekete-Jordan} for $d=2$:
\begin{theorem}\label{thm:no_c(F_d)}
For any $d\ge 2$ and any finite $c$, $F_d$ is \emph{not} $d$-rigid with $c$-locations.
\end{theorem}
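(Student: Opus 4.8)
The plan is to build, for any putative finite location set $A \subseteq \mathbb{R}^d$ with $|A| = c$, a generically $d$-rigid graph $G$ that cannot be made infinitesimally rigid by any map $f : V(G) \to A$. The key tension to exploit is that infinitesimal rigidity forces a lower bound $|E(G)| \geq d|V(G)| - \binom{d+1}{2}$, so $G$ must be quite dense, while any map into a fixed $c$-point set collapses many vertices onto the same point — and collinear or coincident images among neighbors destroy the rank of the rigidity matrix. First I would take $G$ to be (a generically $d$-rigid graph containing) a large complete bipartite graph $K_{n,n}$, or more robustly a union of many disjoint copies of $K_{d+1}$ together with a sparse generically-rigid "gluing" structure, chosen so that $|V(G)|$ is huge relative to $c$ and $d$ but $G$ is still generically $d$-rigid (e.g. start from any generically $d$-rigid graph and repeatedly apply vertex splits or $0$-extensions, which preserve generic rigidity).

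The core of the argument is a counting/pigeonhole obstruction. Fix $A$ with $|A|=c$. For any $f : V(G) \to A$, partition $V(G)$ into the fibers $f^{-1}(p)$, $p \in A$. If some fiber $f^{-1}(p)$ has size $\geq 2$ and contains two vertices $u,v$ that are adjacent in $G$, then the edge $uv$ is mapped to a degenerate (zero-length) segment and contributes a zero row to the rigidity matrix, so that edge is "wasted." More generally, whenever a vertex $v$ and all its neighbors are mapped into an affine subspace of dimension $< d$, the equations \eqref{eq:infi-rigidity} at $v$ leave a nontrivial velocity free, yielding a nontrivial motion. So the strategy is: design $G$ so that, by pigeonhole, for every $f : V(G) \to A$ there is some vertex $v$ all of whose $G$-neighbors (together with $v$) land in a proper affine subspace of $\mathbb{R}^d$, forcing a nontrivial infinitesimal motion. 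The cleanest way: for $d = 2$ this is essentially the Fekete–Jordan construction; take many vertices of degree exactly $d$ (a $d$-valent vertex $v$ is rigidly attached only if its $d$ neighbors and $v$ are affinely independent), attach $\gg c^{d}$ such "pendant" $d$-valent vertices to a fixed generically rigid core, and observe that two such vertices sharing the same image location for themselves and for their $d$ neighbors (a configuration of $d+1$ points out of $A$, of which there are only $\binom{c}{\le d+1}$ choices) must, if $A$ is in general position, be fine individually — so instead force degeneracy by making the core itself large, or by using vertices of degree $d$ attached so that any injective-on-the-relevant-set map is impossible once $|V(G)| > c$.

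Concretely, here is the construction I would push through. Let $H$ be a minimal generically $d$-rigid graph (say $K_{d+2}$), and form $G_N$ by adding $N$ new vertices $w_1,\dots,w_N$, each joined to a common fixed $d$-subset $\{x_1,\dots,x_d\}$ of $V(H)$ — this keeps $G_N$ generically $d$-rigid, since each $w_i$ is a $0$-extension. Now suppose $f : V(G_N) \to A$ makes $f(G_N)$ infinitesimally rigid. The images $f(x_1),\dots,f(x_d)$ are $d$ fixed points of $A$; for $f(G_N)$ to be infinitesimally rigid, each $w_i$ must satisfy that $f(w_i), f(x_1), \dots, f(x_d)$ affinely span $\mathbb{R}^d$, in particular $f(w_i)$ avoids the affine hyperplane through $f(x_1),\dots,f(x_d)$ — fine, but moreover the $N$ equations coming from the $w_i$ must, together, not introduce any flex of the $x_j$'s relative to $H$; this is automatic. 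The real obstruction must instead come from within the core: so I would iterate, building $G$ as a tree-like amalgamation of many copies of this gadget so that some $d$-valent vertex is forced (by pigeonhole on $A$, once $N \gg c^{d+1}$) to have its entire neighborhood land in a hyperplane. The main obstacle — and the step I expect to require the most care — is precisely arranging the combinatorics of $G$ so that no map into $A$ can simultaneously keep all the local $(d{+}1)$-point neighborhoods in general position \emph{while} respecting the adjacency constraints; making the dependency among the gadgets tight enough that a genuine non-general-position collision is unavoidable for $|V(G)|$ large, rather than merely "wasteful," is the heart of the proof. I would handle this by a rank count: show $\operatorname{rank}$ of the rigidity matrix of $f(G)$ is at most $d\,|f(V(G))| - \binom{d+1}{2} + (\text{contributions from edges within fibers})$, and bound the latter to be strictly less than what infinitesimal rigidity of $G$ requires, using that $|f(V(G))| \leq c$ is fixed while $|V(G)| \to \infty$.
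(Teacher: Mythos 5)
Your intuition is exactly right — force some degree-$d$ vertex to have its closed neighborhood land in a proper affine subspace, so that it admits a perpendicular velocity and hence a nontrivial infinitesimal flex — but the proposal never lands on a construction that actually achieves this, and the fallback argument you propose is incorrect. You correctly notice that your concrete gadget (attach $N$ pendant degree-$d$ vertices $w_1,\dots,w_N$ to one fixed $d$-subset $\{x_1,\dots,x_d\}$) does \emph{not} yield a contradiction: once $f(x_1),\dots,f(x_d)$ are in general position, each $w_i$ merely needs to avoid their affine hull, which is easy even with a tiny $A$. After noticing this you gesture at the right idea (``using vertices of degree $d$ attached so that any injective-on-the-relevant-set map is impossible once $|V(G)|>c$'') but never say how to attach them, and then retreat to a rank-counting bound that is false. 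Indeed, the claimed inequality $\operatorname{rank} R(f(G))\le d\,|f(V(G))|-\binom{d+1}{2}+(\text{edges within fibers})$ already fails for $G=K_4$ in $\R^2$ with three of the four vertices at distinct generic points $a,b,c$ and the fourth also at $a$: the single degenerate edge is the only loss, the other five edges give rank $5=2\cdot 4-3$, and the framework is infinitesimally rigid, while your bound would cap the rank at $2\cdot 3-3+1=4$. Collisions between non-adjacent vertices simply do not force a rank drop.

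The missing idea is to attach a degree-$d$ pendant vertex $v(B)$ to \emph{every} $d$-element subset $B\subseteq V(G)$ of the starting graph $G$, not to a single fixed $d$-subset. Start from any minimally generically $d$-rigid $G$ with $|V(G)|>c$ (such $G$ exist since every Laman graph, respectively every minimally $d$-rigid graph, is a proper subgraph of a larger one). Each $v(B)$ is a $0$-extension, so the augmented graph $G'$ is still generically $d$-rigid. Now for any $f:V(G')\to A$ with $|A|=c$, pigeonhole on $V(G)$ gives two vertices $x\ne y$ with $f(x)=f(y)$. Pick any $d$-subset $B\ni x,y$. Then $f(B)$ has at most $d-1$ distinct points, so $f(B)\cup\{f(v(B))\}$ spans an affine subspace of dimension $\le d-1$, and a nonzero velocity at $v(B)$ orthogonal to that subspace (zero velocity elsewhere) is a nontrivial infinitesimal motion of $f(G')$. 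This gives the contradiction for every $f$, and the whole point is that by attaching a gadget to \emph{every} $d$-subset you are guaranteed to catch whatever collision pigeonhole produces, eliminating the coordination problem that you correctly flagged as the heart of the matter. For $d=2$ this is precisely the Fekete--Jord\'an argument you cite; the paper also gives an alternative for $d>2$ via repeated coning of Laman graphs and the Cone Lemma, reducing to the planar case.
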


Let us remark that infinitesimal rigidity for (slightly) non-generic embeddings has been considered in the literature, for subfamilies of triangulated spheres and manifolds -- in the centrally symmetric case and in the balanced case, see e.g. Stanley~\cite{Stanley-CSpolytopes,Stanley-balancedCM} (phrased in the language of face rings), for convex position, see e.g. Izmestiev and Schlenker~\cite{Izmestiev-Schlenker}, and for embeddings on small grids~\cite{Fekete-Jordan} -- however, the problem of embedding with a constant number of locations, seems to be new.

\textbf{Outline}:
Preliminaries are given in Section~\ref{sec:prelim}.
We prove Theorem~\ref{thm:no_c(F_d)} in Section~\ref{sec:all}, Theorem~\ref{thm:2-spheres} in Section~\ref{sec:2-spheres}, Theorem~\ref{thm:S_g} in Section~\ref{sec:surfaces},
and conclude with open questions in Section~\ref{sec:conclude}.

\section{Preliminaries}\label{sec:prelim}

\subsection{Complexes and polytopes.}
We set some notation: the \emph{link} of a face $\sigma$ in a simplicial complex $X$ is the subcomplex $\lk_{\sigma}(X)=\{\tau\in X:\ \sigma\cap\tau=\emptyset,\ \sigma\cup\tau\in X \}$ and its (open) star is the filter $\st_{\sigma}(X)=\{\tau\in X: \sigma \subseteq \tau\}$; the set of all $l$-dimensional faces ($l$-faces for short) of $X$ is $X_l$ and its \emph{$l$-skeleton} is the subcomplex $X_{\le l}=\cup_{i\le l}X_i$.
The \emph{geometric realization} of $X$ is denoted by $|X|$; we say $X$ is a \emph{simplicial $d$-sphere / surface} if $|X|$ is a topological $d$-sphere / surface.

A $d$-dimensional polytope is \emph{simplicial} if any face in its boundary is a simplex; it is \emph{$k$-neighborly} if any subset of its vertices of size $k$ is the vertexset of some face of it; it is \emph{stacked} if it is either a $d$-simplex, or can be obtained from a stacked $d$-polytope by gluing on one of its $(d-1)$-faces a $d$-simplex.

\subsection{Rigidity}
The rigidity matrix of the framework $f(G)$, of a graph $G=(V,E)$ and an embedding $f:V\rightarrow \R^d$, is the $d|V|\times|E|$ real matrix where in the column of edge $vu\in E$ the entries in the $v$-rows are $f(v)-f(u)$, in the $u$-rows are $f(u)-f(v)$, and the other entries in this column are zero; denote this matrix by $R(f(G))$. When $|V|=n>d$, the rank of $R(f(G))$ is always
$\le dn-\binom{d+1}{2}$, and equality holds iff $f(G)$ is infinitesimally rigid.

The space of \emph{affine $2$-stresses},
or simply \emph{stresses}, of a framework $f(G)$ of a graph $G=(V,E)$ is
the real vector space
\[\{(w_{\sigma})\in \mathbb{R}^{E}: \forall \tau\in V,\ \ \sum_{\tau \in \sigma}w_{\sigma}(f(\sigma\setminus\tau)-f(\tau))=\overrightarrow{0} \}.\]
The space of affine $2$-stresses of $f(G)$ is exactly the kernel of the rigidity matrix $R(f(G))$.

For $G$ the $1$-skeleton of a simplicial $2$-sphere with $n$ vertices, it has $3n-6$ edges, and thus for $f:V\rightarrow \mathbb{R}^3$, $f(G)$ is infinitesimally rigid iff its only affine $2$-stress is the trivial (all zero) stress.

\section{All $d$-rigid graphs}\label{sec:all}
Clearly, for $F'_d \subseteq F_d$ the subfamily of \emph{minimally} generically infinitesimally $d$-rigid graphs, $c_d(F_d)=c_d(F'_d)$. For $d=2$, $F'_2$ is the family of the well studied Laman graphs.

Theorem~\ref{thm:no_c(F_d)} for $d=2$ was proved in~\cite[Sec.4]{Fekete-Jordan}, we give their short argument for completeness: Suppose by contradiction that $c=c_2(F'_2)$ is finite, and let $G$ be a Laman graph that is $2$-rigid with $c$-locations but not with $(c-1)$-locations. We may assume $|V(G)|=n>c$ (as any Laman graph is a strict subgraph of another Laman graph), so for any map $f:V(G)\rightarrow A$, $|A|=c$, $A \subset \mathbb{R}^2$, there exist two vertices $w,u\in V(G)$ with $f(w)=f(u)$.
Let $G'$ be obtained from $G$ by adding for each pair of vertices $x,y \in V(G)$  a new vertex $v=v(x,y)$ and two new edges $vx$ and $vy$.
Then $G'$ is Laman because $G'$ is obtained from $G$ by Henneberg moves, and these moves preserve the property of being Laman.
Assume by contradiction that there exists a map $f_{G'}:V(G')\rightarrow A$ with $|A|=c$ and $f_{G'}(G')$ infinitesimally rigid.
Consider $x,y \in V(G)$ with $f_{G'}(x)=f_{G'}(y)$ and the vertex $v=v(x,y)$.
Let $a(v)$ be any non-zero vector perpendicular to $f_{G'}(x)-f_{G'}(v)=f_{G'}(y)-f_{G'}(v)$, and let $a(w)=0$ for every vertex $w\neq v$. Then $a$ is a non-trivial infinitesimal motion of $G'$, which is a contradiction.


\textbf{Case $d>2$}: First proof: Argue similarly to the $d=2$ case, when adding a new vertex $v(B)$ for any $d$-subset $B\subseteq V(G)$, and connecting $v(B)$ to all vertices in $B$, to obtain $G'$ from $G\in F'_d$, where $G$ requires $c=c_d(F'_d)$ locations.

Second proof: Restrict to repeated cones over Laman graphs, forming a subfamily $F''_d$ of $F'_d$, so $c_d(F''_d)\le c_d(F'_d)\le c_d(F_d)$. By the Cone Lemma (see e.g.~\cite{TayWhiteWhiteley-Skel2Cone}), any $G\in F''_d$ is minimally generically infinitesimally $d$-rigid, and projection gives that $c_2(F'_2)\le c_d(F''_d)$, so by the $d=2$ case, $c_d(F_d)$ is infinite.
$\square$

\section{Graphs of simplicial $2$-spheres}\label{sec:2-spheres}
The proof of Theorem~\ref{thm:2-spheres} requires
the following couple of simple facts: let $G$ be a maximal planar graph on at least $5$ vertices, equivalently $G$ is the $1$-skeleton of a triangulation of the $2$-sphere different from the boundary of a tetrahedron.
\begin{lemma}\label{lem:deg3,4,5}
$G$ has a vertex of degree $\in\{3,4,5\}$.
\end{lemma}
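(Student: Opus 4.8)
The plan is to use Euler's formula as the only tool. For a triangulation of the $2$-sphere on $n \ge 5$ vertices we have $|E| = 3n - 6$, so the average degree is $2|E|/n = 6 - 12/n < 6$. Hence there must be a vertex of degree at most $5$. The only thing left is to rule out degrees $0$, $1$, $2$: since $G$ is the $1$-skeleton of a triangulation of a closed surface, every vertex lies in at least one triangle, and in a triangulated surface the link of a vertex is a cycle, so every vertex has degree at least $3$. Combining, some vertex has degree in $\{3,4,5\}$.

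Concretely, I would write it as follows. First I would recall that in any triangulation of a closed surface, the link of each vertex $v$ is a cycle (a $1$-sphere), so $\deg(v) = |\lk_v(G)| \ge 3$. Next, I would invoke the standard face-count: a triangulation of $S^2$ on $n$ vertices has $f_2 = 2n - 4$ triangles and $f_1 = 3n - 6$ edges (each triangle has three edges, each edge lies in two triangles, plus Euler's relation $n - f_1 + f_2 = 2$). Then $\sum_{v} \deg(v) = 2 f_1 = 6n - 12$, so the minimum degree $\delta$ satisfies $\delta \le (6n-12)/n = 6 - 12/n < 6$, hence $\delta \le 5$. Together with $\delta \ge 3$ this gives a vertex of degree in $\{3,4,5\}$.

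There is essentially no obstacle here; the statement is a classical counting fact and the proof is a couple of lines. The only mild subtlety worth a sentence is why $n \ge 5$ is assumed: for $n = 4$ the graph is $K_4$, the boundary of the tetrahedron, where every vertex has degree $3$ — still in $\{3,4,5\}$, so the hypothesis $n \ge 5$ is not strictly needed for this particular conclusion but matches the setup of Theorem~\ref{thm:2-spheres} (where the tetrahedron is handled trivially). I would therefore keep the proof to two or three sentences: establish $\delta \ge 3$ from the surface condition, compute $\sum_v \deg(v) = 6n - 12$ from Euler's formula, and conclude $\delta \le 5$ by averaging.
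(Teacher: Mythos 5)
Your proof is correct and follows essentially the same route as the paper's one-line argument: Euler's formula gives average degree $< 6$, and the minimum degree is at least $3$ (the paper cites maximality of $G$; you note the link of each vertex is a cycle — the same fact). Your version simply spells out the arithmetic and the lower-bound justification in more detail.
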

\begin{proof}
By Euler's formula, the average degree is $<6$, and by maximality of $G$ it is $\ge 3$.
\end{proof}
\begin{lemma}\label{lem:starhole}
For any vertex $v\in G=(V,E)$, there is an edge $uv\in G$ such that the contraction of $v$ to $u$ yields a graph $G'=(V-\{v\},E')$ which is again the $1$-skeleton of a triangulation of the $2$-sphere.
\end{lemma}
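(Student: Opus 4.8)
The plan is to use the standard criterion for when an edge contraction in a triangulation of a surface yields again a simplicial complex triangulating the same surface: the contraction of the edge $uv$ is "safe" precisely when $\lk(u)\cap\lk(v)$ consists of exactly the two vertices forming the two triangles on $uv$, i.e. there is no "missing triangle" $uvw$ with $w$ a common neighbor but $uvw\notin G$. (Equivalently, in the planar/spherical language, $uv$ is contractible iff it lies in no separating triangle.) So the task reduces to showing that for every vertex $v$ there exists at least one edge at $v$ that is contractible in this sense. I would first recall or re-derive this criterion briefly, then prove the existence of a contractible edge at $v$.

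First I would set $d=\deg(v)$ and write the link of $v$ as a cycle $C=(u_1,u_2,\dots,u_d)$, so the triangles at $v$ are $vu_iu_{i+1}$ (indices mod $d$). The edge $vu_i$ fails to be contractible exactly when $u_i$ has a neighbor $w\notin\{u_{i-1},u_{i+1}\}$ that is also a neighbor of $v$, i.e. $w=u_j$ for some $j\ne i-1,i,i+1$ — this is a "chord" $u_iu_j$ of the link cycle $C$ that is an edge of $G$. So $vu_i$ is contractible iff $u_i$ is the endpoint of no chord of $C$. Now I invoke planarity/sphericity: the cycle $C$ bounds a disk on each side in the $2$-sphere; the side containing $v$ is the triangulated star of $v$ and has no chords inside it (those would create faces other than the $vu_iu_{i+1}$). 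On the other side sits a triangulated disk $D$ whose boundary is $C$; every chord $u_iu_j\in G$ lies in $D$.

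So I must show: the triangulated disk $D$ with boundary cycle $C$ of length $d$ has a boundary vertex incident to no interior chord. If $D$ has no interior vertices, then $D$ is a triangulated polygon, and a triangulation of a $d$-gon (with $d\ge 3$) always has an "ear" — a triangle with two boundary edges — whose apex is a boundary vertex incident to no chord; this is the key step. If $D$ does have interior vertices, then not every boundary vertex can be joined by a chord or edge... hmm, actually I would argue more carefully: suppose for contradiction every $u_i$ is incident to a chord of $D$; I want a contradiction from the planarity of $D$ together with $|C|=d$. The cleanest route: induct on the number of triangles of $D$. Actually the slickest argument avoids $D$ entirely: take any vertex $u_i$ of minimum degree in the subgraph $G[\{u_1,\dots,u_d\}]$ induced on the link; if $\deg_G(v)\in\{3,4,5\}$ one can check small cases directly, and in general one uses that the chords of $C$ form an outerplanar-type graph on the cycle $C$, hence have a vertex of degree $\le $ the two cycle-neighbors only — i.e. an ear. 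The main obstacle I expect is handling the case where $D$ has interior vertices: there the chords need not triangulate a polygon, and I must be sure that "every boundary vertex meets a chord" is genuinely impossible, which I would nail down by a counting/Euler argument on $D$ (number of boundary chords is bounded because $D$ is planar with fixed boundary), or by contracting toward the interior. A fallback, if a fully self-contained argument gets messy, is to cite the classical fact (Steinitz-type / well-known for triangulated surfaces) that every vertex of a triangulation of a surface of genus $0$ lies on a contractible edge, since the only triangulated $2$-sphere with no contractible edge at some vertex is ruled out by $n\ge 5$ and Lemma~\ref{lem:deg3,4,5}.
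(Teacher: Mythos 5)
Your approach is essentially the paper's: reduce to finding a vertex $u_i$ of the link cycle $C=\lk_v(G)$ that is incident to no chord of $C$ in $G$, and deduce its existence from planarity (the paper writes only ``this follows from planarity'' and then spells out the cases $\deg(v)\in\{3,4,5\}$, which are the only ones it needs in the induction). Your worry about interior vertices of $D$ is a non-issue you should drop rather than hedge on: the only fact required is that the chords of $C$ lying in $G$ are pairwise non-crossing arcs inside the disk $D$, so a chord of minimal span $u_iu_j$ already forces some $u_k$ strictly between $u_i$ and $u_j$ to be chord-free, regardless of how $D$ is triangulated internally.
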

The contraction as above is defined by the edgeset $E':=(E\setminus \{e\in E:\ v\in e\})\cup\{uw:\ vw\in E, w\neq u \}$.
This lemma too should be known; as we failed to find a reference we provide a proof.
\begin{proof}
Let the vertices in $\lk_v(G)$ be $u_1,u_2,\ldots,u_t$ in the cyclic order. We need to show that for some $i$, $u_iu_j\notin G$ for any $j\neq i+1,i-1 \mod{t}$, namely that $vu_i$ is not part of a missing triangle of the sphere triangulation (a.k.a. separating triangle). Then $u=u_i$ is good.

This follows from planarity, and is even simpler to argue when $\deg(v)\in\{3,4,5\}$, which suffices for our purposes:
if $\deg(v)=3$ then any $u_i$ is good. If $\deg(v)=4,5$, if $u_i$ is not good then w.l.o.g. by relabeling $(\mod t)$ $u_iu_{i+2}\in G$, so planarity shows that $u=u_{i+1}$ is good (and there exists another $u_j$ which is good as well).
\end{proof}

The proof of Theorem~\ref{thm:2-spheres} follows by showing:
\begin{theorem}\label{thm:24}
Let $A$ be a generic subset of $\R^3$, $|A|=76$. Then for any graph $G\in F(S^2)$ there exists a function $f_G:V(G)\rightarrow A$ such that

(R) the framework $f_G(G)$ is infinitesimally $3$-rigid, and

(C) for any subgraph $H\subseteq G$ of a subcomplex that triangulates a $4-$ or $5-$gon, the restriction of $f_G$ to $V(H)$ is injective.
Equivalently, for any subcomplex which is a disc consisting of up to $3$ triangles, $f_G$ is injective on its vertices.
\end{theorem}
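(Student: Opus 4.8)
The plan is to induct on the number of vertices $n$ of $G\in F(S^2)$, doing a contraction as in Lemma~\ref{lem:starhole} to pass to a smaller sphere triangulation, and then to re-expand while controlling both the rigidity claim (R) and the injectivity claim (C). Concretely, given $G$ with $n\ge 6$, pick a vertex $v$ of degree $\in\{3,4,5\}$ (Lemma~\ref{lem:deg3,4,5}) and an edge $uv$ whose contraction gives a sphere triangulation $G'$ (Lemma~\ref{lem:starhole}, in the form proved there for low-degree $v$). By induction there is $f_{G'}:V(G')\to A$ satisfying (R) and (C). I would then define $f_G$ to agree with $f_{G'}$ on $V(G)\setminus\{v\}$ (identifying $V(G')=V(G)\setminus\{v\}$ via the contraction) and choose $f_G(v)\in A$ to be some point not yet used on the relevant local configuration. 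The point of fixing $|A|=76$ is exactly that the constraints imposed on $f_G(v)$ by (C) — it must be distinct from the images of all vertices lying in a disc of at most $3$ triangles containing $v$, i.e. within distance governed by $\lk_v(G)$ and its neighbours — bound the number of forbidden locations by a constant, leaving room in $A$; one should count these forbidden vertices carefully (a degree-$5$ vertex, its link, and the extra neighbours forced by $4$/$5$-gon subcomplexes), and $76$ should be the resulting worst-case bound plus the generic slack needed for (R).

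For the rigidity step (R) I would use the standard "vertex splitting / edge contraction" machinery for infinitesimal rigidity: contracting an edge of a generically $3$-rigid framework in a way that stays within sphere triangulations preserves generic $3$-rigidity, and conversely the inverse operation (vertex split along $v$, re-attaching $\deg(v)$ edges in the sphere pattern) preserves infinitesimal rigidity provided the newly placed vertex $f_G(v)$ is in suitably general position relative to the already-placed link vertices. Equivalently, in stress language (as set up in the Preliminaries, since $f(G)$ is infinitesimally rigid iff its only affine $2$-stress is zero and $|E|=3n-6$), I would show that a nonzero stress on $f_G(G)$ would restrict to a nonzero stress on $f_{G'}(G')$ after the contraction, contradicting the inductive hypothesis — this is where the genericity of $A$, i.e. algebraic independence of the coordinates, is invoked to rule out the measure-zero coincidences. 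The base case is the boundary of a tetrahedron on $5$ or fewer... actually the true base is $G=\partial\Delta^3$ (or the octahedron), which is trivially infinitesimally rigid at any affinely-spanning choice of $\le 4$ generic points; (C) is vacuous or immediate there.

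The main obstacle I anticipate is making the two requirements compatible simultaneously: the contraction chosen to preserve rigidity (Lemma~\ref{lem:starhole}) might force $v$'s new location to collide with a vertex that (C) forbids, or the re-expansion that is "safe" for (C) might not be the one that keeps the framework infinitesimally rigid. Handling this needs a careful accounting argument showing that, among the finitely many admissible contraction edges $uv$ at a low-degree $v$, and the $76-(\text{forbidden count})$ admissible re-placements of $v$, at least one choice works for both — which in turn relies on (i) a precise enumeration of which vertices must be avoided by $f_G(v)$ to keep (C) (those in discs of $\le 3$ triangles through $v$, so essentially $\lk_v(G)$ plus a bounded shell), and (ii) the fact that infinitesimal rigidity is preserved for all but a proper subvariety of placements of $v$, hence for all but finitely many points of a generic $A$. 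I would also need the easy observation that (C) is inherited in the other direction — i.e. that the disc-subcomplexes of $G$ either lie in $G'$ or meet $v$ — so that the inductive (C) on $G'$ plus the one new constraint on $f_G(v)$ really gives (C) on $G$. The bulk of the write-up will be this bookkeeping; the rigidity preservation is classical (vertex splitting, cf. Whiteley) and the topological facts are Lemmas~\ref{lem:deg3,4,5} and~\ref{lem:starhole}.
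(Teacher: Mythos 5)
Your overall plan — induct by contracting a low-degree vertex (Lemmas~\ref{lem:deg3,4,5} and~\ref{lem:starhole}), extend $f_{G'}$ to $f_G$ on re-expansion, and track both (C) and (R) — is the same skeleton as the paper's proof. But there is a genuine gap in the rigidity step, and it is exactly the place where the paper invokes its one nontrivial proposition.

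You write that ``infinitesimal rigidity is preserved for all but a proper subvariety of placements of $v$, hence for all but finitely many points of a generic $A$.'' That inference is not valid as stated, and it is the crux of the whole theorem. The offending subvariety is a hypersurface $\{P=0\}$ in $\mathbb{R}^3$ whose defining polynomial $P$ (a $(3n-6)\times(3n-6)$ minor of the rigidity matrix with $f_G(v)$ as a variable) has coefficients that depend on the placements of all the \emph{other} vertices, which already lie in $A$. So you cannot simply say a generic $A$ misses the hypersurface: $A$ and the hypersurface are not chosen independently. A priori, every point of $A$ could lie on it. What saves the argument, and what you are missing, is a degree bound: since $v$ has degree $b\le 5$, each edge at $v$ contributes at most one factor of $(x,y,z)$ to the minor, so $\deg P\le b$. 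Then a separate linear-algebra/Vandermonde argument (Proposition~\ref{prop:key} in the paper) shows that for a generic $A$, no $\binom{b+3}{3}$ points of $A$ can all lie on a degree-$b$ hypersurface whose coefficients are rational functions of other points of $A$ — the genericity is used through a nonzero determinant of a monomial-evaluation matrix, not through a naive ``generic points miss varieties'' assertion. This is what turns ``some point in $\mathbb{R}^3$ works'' into ``some point of the fixed finite $A$ works,'' and it is where the number $\binom{8}{3}=56$ comes from. Combined with the count of at most $20$ points excluded by~(C), the bound is $20+(56-1)=75$, hence $|A|=76$ suffices.

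Relatedly, the paper splits (R) into two explicit steps that your sketch blurs: first it uses Whiteley's vertex-splitting (Contraction) Lemma, Proposition~\ref{prop:WhiteleylikeContraction}, to show that \emph{some} $a\in\mathbb{R}^3$ (not necessarily in $A$) makes $f_G(G)$ rigid, with the needed hypothesis ``three distinct images in $\lk_v(G)$'' guaranteed by (C) for $f_{G'}$; and only then does it invoke Proposition~\ref{prop:key} to move the good placement into $A$. Your ``a nonzero stress on $f_G(G)$ restricts to a nonzero stress on $f_{G'}(G')$'' is roughly the content of Whiteley's lemma, but it accomplishes only the first step. Without the degree-bounded polynomial argument, the finite bound $76$ has no justification, and the proof does not close.
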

For Theorem~\ref{thm:2-spheres} we only need (R), however, for our inductive proof to work we require (C) as well.
The theorem clearly holds if $|V(G)|\le 76$. Assume $|V(G)|>76$, let $v\in G$ be a vertex of degree $3,4$ or $5$ (it exists by Lemma~\ref{lem:deg3,4,5}), and let $uv\in G$ such that the contraction of $v$ to $u$ gives a smaller graph $G'\in F(S^2)$ ($u$ exists by Lemma~\ref{lem:starhole}).
By the induction hypothesis, there exists a function $f_{G'}$ satisfying (R) and (C) for $G'$; we will show that $f_{G'}$ can be extended to a function $f_G$ as required. We just need to show that $A$ has enough room so that $f_G(v)$ can be defined so that (R) and (C) hold for $G$.
We loose nothing by assuming that $f_{G'}$ has an image of smallest possible size.

To achieve (C), $f_G(v)$ just needs to avoid the values of $f_G$ on all the vertices (i) in the link $\lk_G(v)$, (ii) in triangles without $v$ that share an edge with triangles with $v$, and (iii) in triangles without $v$ that share an edge with triangles that share an edge with triangles with $v$.
Thus, when $\deg(v)=3$ (resp. $4$; resp. $5$), $f_G(v)$ needs to avoid a set $N$ of at most $12$ (resp. $16$; resp. $20$) values in $A$, so if $|A|>20$, we can define $f_G(v)$ so that (C) is satisfied.

We now turn to the rigidity requirement (R).
Let $c(G)$ (resp. $C(G)$) be the minimum size $c$ such that $G$ is $3$-rigid with $c$-locations (resp. and with $f_G:V(G)\rightarrow A$, $|A|=c$, satisfying property (C) as well). Thus $c(G)\leq C(G)$, hence it is enough to show that for $G,G'\in F(S^2)$ as above, $C(G)\le \max(C(G'),76)$.

Let $f_{G'}:V(G')\rightarrow A$ with $|A|=C(G')$ satisfy (R) and (C) of Theorem~\ref{thm:24}.

\textbf{First}, we notice that $f_{G'}$ has an extension ${f}_G: V(G)\rightarrow A\cup\{a\}$, with ${f}_G(v)=a$ not necessarily in $A$, such that ${f}_G(G)$ is rigid.
This follows by combining (C) with a closer look at Whiteley's proof of the Contraction Lemma~\cite{Whiteley-split}; rephrased here suitably.

\begin{proposition}[Contraction Lemma]\label{prop:WhiteleylikeContraction}
Let $vu$ be an edge in $G=(V,E)$ and assume that $u$ and $v$ have two
common neighbors $a$ and $b$. Contract $v$ to $u$ to obtain graph $G'=(V-v,E')$.
Let $f:V-v \rightarrow \mathbb{R}^3$ such that

(i) $f(G')$ is infinitesimally rigid, and

(ii) $f(a),f(b),f(u)$ are all different.\\
Then there exists an extension $f(v)$ of $f$ to $V$, such that the framework $f(G)$ is infinitesimally rigid.
\end{proposition}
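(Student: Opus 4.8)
The plan is to adapt Whiteley's vertex-splitting argument, but in the contracted direction, keeping careful track of the rank of the rigidity matrix. Write $d=3$. Recall that $f(G')$ infinitesimally rigid with $n-1$ vertices means $\operatorname{rank} R(f(G')) = 3(n-1)-6$. We want to produce $f(v)\in\R^3$ so that $\operatorname{rank} R(f(G))=3n-6$, i.e. adding the new vertex $v$ together with the $\deg_G(v)$ edges incident to it raises the rank by exactly $3$. Equivalently, by the stress interpretation recalled in the Preliminaries, we must show the only affine $2$-stress of $f(G)$ is trivial, given that this holds for $f(G')$.

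First I would set up the relation between stresses of $f(G)$ and stresses of $f(G')$. Let $w_1=\lk_{G'}(u)\cap\lk_{G'}(v)$ be the common neighbors; by hypothesis $a,b\in\lk_{G'}(u)\cap\lk_{G'}(v)$ after contraction (in $G$, $a$ and $b$ are common neighbors of $u$ and $v$, and they remain neighbors of $u$ in $G'$). Given a stress $\omega=(\omega_e)_{e\in E}$ on $f(G)$, I want to build from it a stress on $f(G')$: the natural recipe is to keep $\omega_e$ for edges not touching $v$, transfer the stress on $vw$ to $uw$ for $w\in\lk_G(v)$, and adjust the stress on $uw$ when $w$ is a common neighbor. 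The equilibrium condition at every vertex $\ne u,v$ is then automatically inherited; at $u$ one combines the two equilibrium equations (at $u$ and at $v$ in $G$) using that the edge $uv$ contributes oppositely. The point where this transfer can fail is exactly when $f(v)$, $f(u)$ and the positions of the common neighbors are in special position — this is why hypothesis (ii), that $f(a),f(b),f(u)$ are distinct, is needed: it guarantees that the only way the transferred stress is forced to be trivial on $G'$ is if $\omega$ itself was supported in a way that must vanish. Concretely, one argues that a nontrivial stress on $f(G)$ would, after transfer, either give a nontrivial stress on $f(G')$ (contradicting (i)) or force a linear dependence among $f(a)-f(u)$, $f(b)-f(u)$ and the $f(v)-f(u)$ direction that a generic (or merely suitably chosen) $f(v)$ avoids.

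The cleaner route, which I would actually carry out, is the dual/rank-counting one following Whiteley: place $f(v)$ first on the line through $f(u)$ — i.e. set $f(v)=f(u)$ as a degenerate ``position'' — and observe that $R(f(G))$ with this choice has a block structure: the edge $uv$ becomes a zero column, and for each other neighbor $w$ of $v$ the columns of $uw$ and $vw$ become parallel. A short computation shows that in this degenerate configuration the rank of $R(f(G))$ equals $\operatorname{rank} R(f(G')) + 2$, with the rank deficiency concentrated in a $2$-dimensional space of motions that move $v$ (and nothing else) in the plane perpendicular to the common line. Then I would argue that moving $f(v)$ off the degenerate position in a generic direction breaks exactly one of these two extra degrees of freedom is \emph{not} quite enough — rather, one shows that the rank is upper semicontinuous, so it suffices to exhibit \emph{one} position of $f(v)$ with $\operatorname{rank} R(f(G)) = 3n-6$; and the existence of such a position follows because the set of $f(v)$ for which the rank drops is the zero set of the relevant maximal minors, a proper algebraic subset provided this set is not everything. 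To show it is not everything, use the two common neighbors $a,b$: with $f(v)$ chosen so that $f(v),f(u),f(a),f(b)$ are affinely independent (possible since $f(a),f(b),f(u)$ are already distinct and affinely span at most a plane, leaving room in $\R^3$), the edges $va,vb,vu$ together with the rigidity of $f(G')$ pin $v$ completely — any infinitesimal motion of $f(G)$ fixing $f(G')$ must fix $v$, because a point in $\R^3$ with prescribed infinitesimal distances to three affinely independent points has no infinitesimal freedom.

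The main obstacle, and the step I would spend the most care on, is the transfer-of-stress bookkeeping at the vertex $u$ — making sure the combined equilibrium equation is exactly right and that the only degeneracy is the one controlled by hypothesis (ii). I expect that hypothesis (ii) (plus the fact that $a,b,u$ being distinct forces them to be affinely independent with a generic $f(v)$, or at least span a plane that $f(v)$ can be chosen to avoid) is precisely what is needed and sufficient; condition (C) in Theorem~\ref{thm:24} is what guarantees (ii) can be met in the inductive application, since $a,b,u\in\lk_G(v)$ lie in a disc of at most $3$ triangles around $v$. I would then conclude by invoking upper semicontinuity of rank to pass from ``some position works'' to ``a generic position works,'' which is what the inductive argument in Theorem~\ref{thm:24} ultimately consumes.
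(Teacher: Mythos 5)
The paper does not supply its own proof of Proposition~\ref{prop:WhiteleylikeContraction}; it is deferred to Whiteley~\cite{Whiteley-split}. Judged on its own terms, your argument has a genuine gap. The ``pinning'' step, which you say is the one you would actually carry out, does not work: you write that the edges $va,vb,vu$ ``together with the rigidity of $f(G')$'' pin $v$, so ``any infinitesimal motion of $f(G)$ fixing $f(G')$ must fix $v$.'' But $f(G')$ is not a subframework of $f(G)$. After the contraction, $G'$ contains the edges $uw$ for every $G$-neighbor $w\neq u$ of $v$, and those edges are absent from $G$; conversely the restriction of $G$ to $V-v$, i.e.\ $G-v$, is generally \emph{not} infinitesimally rigid, because it is $G'$ with several edges at $u$ deleted. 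So a motion of $f(G)$ has no reason to restrict to a motion of $f(G')$, and one cannot reduce to checking that $v$ is pinned by three affinely independent neighbors. The entire content of Whiteley's argument is precisely the transfer of stresses (equivalently, of rank information) between $f(G)$ and $f(G')$ despite neither being a subframework of the other, and the two shared neighbors plus condition (ii) are what make that transfer injective; they cannot replace the transfer.

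The degenerate-position sketch is also not right as written. With $f(v)=f(u)$, the columns of $uw$ and $vw$ in the rigidity matrix are not parallel: they are supported on the disjoint row blocks $\{u,w\}$ and $\{v,w\}$ respectively, so the block structure you describe does not occur. What actually happens at $f(v)=f(u)$ is a stress-transfer: a stress $\omega$ of the degenerate $f(G)$ maps to a stress $\omega'$ of $f(G')$ by $\omega'(uw)=\omega(uw)+\omega(vw)$ (with the obvious conventions for one-sided neighbors), and the kernel of this map is one-dimensional, spanned by the $uv$ coordinate, exactly when $f(a)-f(u)$ and $f(b)-f(u)$ are linearly independent -- so the operative hypothesis is affine independence of $f(u),f(a),f(b)$, not mere distinctness (in the paper's application $A$ is generic, so these coincide). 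Your rank formula $\operatorname{rank} R(f(G))=\operatorname{rank} R(f(G'))+2$ is asserted but not derived from this, and passing from the degenerate $f(v)=f(u)$ to an honest position requires the rescaled-limit argument (divide the $uv$-column by $t$ with $f(v)=f(u)+tc$, $t\to 0$) plus \emph{lower} semicontinuity of rank (you write ``upper''; the relevant direction is that rank cannot drop on a neighborhood of a point of maximal rank). These are precisely the steps of Whiteley's proof, and none of them is carried out here.
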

To guarantee (ii) in our case, we need that $f_{G'}$ is injective on the restriction to the subgraph $H=\lk_v(G)$ of $G'$,
so (C) for $f_{G'}$ implies (ii).

\textbf{Second}, we show that in fact $f_G(v)$ can
be chosen in $A$ so that resulted $f_G$ is as required in the theorem.
We need the following crucial fact:

\begin{proposition}\label{prop:key}
Let $v\in G$ be a vertex of degree $\le b$, $A\subseteq \mathbb{R}^3$ a generic subset of size $\ge \binom{b+3}{3}$, $f: V(G)\rightarrow \mathbb{R}^3$ such that $f(V(G)\setminus\{v\})=A$ and $f(G)$ is infinitesimally rigid.

Then there exists another map $f': V(G)\rightarrow A$ that agrees with $f$ on $V(G)\setminus\{v\}$, and such that $f'(G)$ is infinitesimally rigid.
\end{proposition}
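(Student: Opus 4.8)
The plan is to treat one well-chosen maximal minor of the rigidity matrix as a polynomial in the location of $v$ alone, and to show, using genericity of $A$, that this polynomial cannot vanish at every point of $A$. Write $n=|V(G)|$ and $k=\deg_G(v)\le b$. Since $f(G)$ is infinitesimally rigid, $\mathrm{rank}\,R(f(G))=3n-6$, so we may fix a $(3n-6)\times(3n-6)$ submatrix of $R(f(G))$ with nonzero determinant. For $p\in\mathbb R^3$ let $f_p\colon V(G)\to\mathbb R^3$ agree with $f$ off $v$ and send $v$ to $p$, and let $\Delta(p)$ be the determinant of the same submatrix of $R(f_p(G))$. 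Then $\Delta$ is not identically zero, since $\Delta(f(v))\neq 0$. The first key point is the degree bound $\deg_p\Delta\le k\le b$: the only entries of $R(f_p(G))$ involving $p$ lie in the $k$ columns indexed by the edges at $v$, each such entry is affine in $p$, and in the expansion of $\Delta$ as a signed sum over transversals each transversal uses at most one entry from each of those $k$ columns, hence contributes a polynomial of degree $\le k$ in $p$. Moreover the coefficients of $\Delta$, viewed as a polynomial of degree $\le b$ in $p=(x_1,x_2,x_3)$, are polynomials with integer coefficients in the coordinates of the points of $A$, because every entry of $R(f_p(G))$ not involving $p$ is a difference of two coordinates of points of $A$.

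This reduces the proposition to finding $a'\in A$ with $\Delta(a')\neq 0$: for such an $a'$, the map $f'$ with $f'(v)=a'$ and $f'=f$ elsewhere has $\mathrm{rank}\,R(f'(G))\ge 3n-6$, hence $=3n-6$, so $f'(G)$ is infinitesimally rigid, as wanted. So suppose for contradiction that $\Delta$ vanishes at every point of $A$.

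Now I would invoke multivariate unisolvence. Let $R=\binom{b+3}{3}=\dim_{\mathbb R}\mathbb R[x_1,x_2,x_3]_{\le b}$, list the monomials of degree $\le b$ as $m_1,\dots,m_R$, and pick any $R$ points $a^{(1)},\dots,a^{(R)}\in A$ (possible since $|A|\ge R$). The assumption $\Delta|_A\equiv 0$ forces the coefficient vector of $\Delta$ into the kernel of the $R\times R$ matrix $M=\big(m_j(a^{(i)})\big)_{i,j}$. But $\det M$ is a polynomial with integer coefficients in the $3R$ coordinates of $a^{(1)},\dots,a^{(R)}$, and it is not the zero polynomial: a standard greedy argument builds $R$ points of $\mathbb R^3$ at which evaluation of degree-$\le b$ polynomials is unisolvent — having chosen $t<R$ such points, the space of degree-$\le b$ polynomials vanishing on all of them is nonzero, so the complement of its zero set is nonempty, and the next point is taken there. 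Since the coordinates of the points of $A$ are algebraically independent over $\mathbb Q$, it follows that $\det M\neq 0$; hence $M$ is invertible, the coefficient vector of $\Delta$ vanishes, and $\Delta\equiv 0$ — contradicting the choice of $\Delta$. Therefore some $a'\in A$ satisfies $\Delta(a')\neq 0$, and the proposition follows.

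The step I expect to demand the most care is the bookkeeping of the first paragraph: establishing the bound $\deg_p\Delta\le b$ and confirming that the coefficients of $\Delta$ are polynomials over $\mathbb Q$ in the coordinates of $A$, so that algebraic independence of those coordinates can actually be brought to bear. Everything after that is a clean application of genericity to the classical fact that the multivariate Vandermonde determinant does not vanish identically. I would also record that the argument never uses injectivity of $f$ on $V(G)\setminus\{v\}$, only that its image is $A$; this matters since in the intended application $|V(G)|$ may be far larger than $|A|$.
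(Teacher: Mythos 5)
Your proposal is correct and follows essentially the same route as the paper's proof: treat a maximal nonvanishing minor of the rigidity matrix as a polynomial $\Delta$ in the position of $v$, bound its degree by $b$, and observe that a generic set of $\binom{b+3}{3}$ points gives an invertible multivariate Vandermonde matrix $M$, so that $\Delta$ cannot vanish on all of $A$ without its coefficient vector being zero. You supply slightly more detail than the paper on two points (the transversal-expansion argument for the degree bound, and the greedy unisolvence argument showing $\det M$ is a nonzero polynomial over $\mathbb Q$), but the key ideas and their order are the same.
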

\begin{proof}
Consider the rigidity matrix
$R(f_G(G);x,y,z)$ of the framework $f_G(G)$, where $f_G(v)$ is a vector with \emph{variable} entries $(x,y,z)$. As we assumed that for some assignment $f_G(G)$ is infinitesimally rigid then, for $|V|\ge 4$, at least one of the determinants of $(3|V|-6)\times(3|V|-6)$-minors in $R(f_G(G);x,y,z)$ is nonzero; denote by $P$ such determinant. Then $P$ is a nonzero polynomial in the variables $x,y,z$ of degree at most $b$ (as $v$ is incident to at most $b$ edges).
The dimension of the space of all such polynomials is $\binom{b+3}{3}$.

Then, as $A$ is generic, no choice of different $\binom{b+3}{3}$ points in $A$ make the determinant $P$ vanish on each of the points. To see this,
put the coefficients of the polynomial $P$ into a vector $U$. Consider the $\binom{b+3}{3}$ by $\binom{b+3}{3}$ matrix $M$, where the entries of the $j$-th row are the values of our $\binom{b+3}{3}$ monomials computed using the coordinates of the $j$-th point, $(x_j,y_j,z_j)$. The determinant of $M$ is then a non-zero polynomial in $x_1,\ldots, x_{\binom{b+3}{3}}, y_1,\ldots,y_{\binom{b+3}{3}}, z_1,\ldots,z_{\binom{b+3}{3}}$ with rational coefficients.
Since our points are generic, it follows that $\det(M)\neq 0$.
On the other hand, if the value of $P$ at each of our $\binom{b+3}{3}$ points is $0$, then $MU=0$. Hence $U=0$, a contradiction, as $P$ is a nonzero polynomial.

As $|A|\ge \binom{b+3}{3}$ there is a choice $f_G(v)\in A$ as desired.
\end{proof}

Summarizing, to guarantee both (C) and (R) for $f_G(G)$, $f_G(v)$ needs to avoid at most $20+(\binom{8}{5}-1)=75$ values in $A$, which is possible for $|A|=76$.

This completes the proof that $c_3(F(S^2))\le 76$.  $\ \ \square$
\\

Variations on the above proof give stronger upper bounds for subfamilies of $F(S^2)$, which we consider next. In the rest of this section we keep the notation used in Theorem~\ref{thm:24} and its proof.

\begin{proposition}\label{prop:stacked}
The family of $1$-skeleta of \emph{stacked} $d$-polytopes is $d$-rigid with $(d+1)$-locations, but not with $d$-locations.

In particular, for the subfamily $T\subseteq F(S^2)$ of graphs of stacked $3$-polytopes, $c_3(T)=4$.
\end{proposition}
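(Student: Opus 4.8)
The plan is to prove the two halves of the statement separately. For the positive direction, I would argue by induction on the number of vertices that the family of $1$-skeleta of stacked $d$-polytopes is $d$-rigid with $(d+1)$-locations, using as the location set $A$ the vertex set of a single (generic, or even just affinely spanning) $d$-simplex $\Delta$. The base case is a $d$-simplex $G$ itself: map $V(G)$ bijectively onto $A$, so $f_G(G) = \Delta$ as a framework, which is infinitesimally $d$-rigid since a simplex is a complete graph on $d+1$ affinely independent points. For the inductive step, recall that a stacked $d$-polytope $P$ on $n$ vertices ($n > d+1$) has a vertex $v$ of degree exactly $d$ whose link is the boundary of a $(d-1)$-simplex (a "free" stacked vertex), and deleting $v$ gives a stacked $d$-polytope $P'$ on $n-1$ vertices. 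By induction there is a map $f_{G'}: V(G') \to A$ with $f_{G'}(G')$ infinitesimally rigid. Since the link of $v$ is a set of $d$ vertices forming a face of $P'$, and any infinitesimally rigid framework remains rigid after adding a new vertex joined to $d$ vertices that are in \emph{affinely general position} (vertex addition / "$0$-extension" in dimension $d$), it suffices to place $f_G(v)$ at the unique element of $A$ not used by the $d$ link-vertices — they receive $d$ distinct elements of $A$ because, one checks inductively, $f_{G'}$ is injective on every face of $P'$ of size $\le d$ (this injectivity is easy to carry along in the induction, exactly as property (C) is carried in Theorem~\ref{thm:24}). The only point to verify is that these $d$ points of $A$, together with the candidate position for $v$, are affinely independent — but any $d+1$ of the $d+1$ points of a $d$-simplex are affinely independent, so this is automatic. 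Hence $f_G(G)$ is infinitesimally rigid and $c_d \le d+1$.

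For the lower bound, suppose for contradiction that the family were $d$-rigid with $d$ locations, i.e.\ there is $A \subseteq \mathbb{R}^d$ with $|A| = d$. Since $|A| = d < d+1$, the points of $A$ cannot affinely span $\mathbb{R}^d$; they lie in a hyperplane $\mathcal{H}$. Then for \emph{any} graph $G$ and any $f: V(G) \to A$, the image $f(V(G)) \subseteq \mathcal{H}$ fails to affinely span $\mathbb{R}^d$, so $f$ is not even an embedding in the sense of the paper, and certainly $f(G)$ cannot be infinitesimally rigid (a framework contained in a hyperplane always admits the nontrivial infinitesimal motion given by an infinitesimal rotation about that hyperplane, provided $|V(G)| \ge 2$, which holds for any stacked $d$-polytope, $d \ge 1$). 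This contradiction shows $c_d \ge d+1$, so $c_d = d+1$.

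Finally, specializing to $d = 3$: the family $T$ of $1$-skeleta of stacked $3$-polytopes is a subfamily of $F(S^2)$ (each stacked $3$-polytope is a simplicial $3$-polytope, so its boundary is a triangulated $2$-sphere), and the above gives $c_3(T) = 4$ directly.

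**Main obstacle.** The only nonroutine point is the inductive step of the upper bound: one must be sure that the classical vertex-addition lemma applies, i.e.\ that the $d$ link-vertices of the removed stacked vertex receive $d$ \emph{distinct} (hence, being $d$ points of a $d$-simplex, affinely independent) locations. This forces carrying an injectivity invariant ("$f$ is injective on each face of size $\le d$") through the induction alongside rigidity, analogous to property (C) in Theorem~\ref{thm:24}; once the invariant is set up correctly, everything else is immediate, since in the stacked case the new vertex has degree exactly $d$ and there is no room for non-generic degeneracies of the kind that required Proposition~\ref{prop:key} in the general sphere case.
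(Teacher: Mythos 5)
Your proof is correct and takes essentially the same approach as the paper's proof sketch: remove a degree-$d$ vertex (contraction and deletion coincide for a stacked vertex, since its link is already a clique), apply induction to the smaller stacked polytope, and place the new vertex at the one location of a $(d+1)$-point set not used by its $d$ neighbours; the paper cites the Gluing Lemma or ``inspecting the rigidity matrix'' where you cite the $0$-extension (vertex-addition) lemma, but these are the same move. The lower bound $c_d \ge d+1$ via the affine-span requirement is also what the paper has in mind.

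One small simplification you could make: the injectivity invariant you propose to carry through the induction is in fact automatic and need not be tracked separately. The $1$-skeleton of a stacked $d$-polytope on $n$ vertices is minimally $d$-rigid, i.e.\ has exactly $dn-\binom{d+1}{2}$ edges; if the framework $f_{G'}(G')$ is infinitesimally rigid then every column of $R(f_{G'}(G'))$ must be nonzero (else the rank drops below $dn-\binom{d+1}{2}$), which forces $f_{G'}(u)\neq f_{G'}(w)$ for every edge $uw$. Since the $d$ neighbours of the stacked vertex $v$ form a clique, they automatically receive $d$ distinct points of $A$, and then the $(d+1)$st point is available for $v$ and is affinely independent from them. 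So the ``main obstacle'' you flag evaporates once one recalls minimality.
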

\begin{proof}[Proof sketch.]
For $v$ a vertex of degree $d$, its contraction gives the graph $G'$ of a stacked polytope. Letting $f_G(v)$ be different from the $f_G$-values of the neighbors of $v$ makes $f_G(G)$ rigid, by say the Gluing Lemma~\cite{Asimow-Roth2, Whiteley96-SomeMatroids}, or simply by inspecting the rigidity matrix of $f_G$.
\end{proof}

Note that the assertion of Proposition~\ref{prop:stacked} extends to any $d$-polytope whose $1$-skeleton contains the $1$-skeleton of a stacked $d$-polytope as a spanning subgraph. There are numerous such examples, e.g.~\cite{Shemer,Padrol}, and in particular:
\begin{corollary}\label{cor:2-nei}
The family of $1$-skeleta of \emph{2-neighborly} $d$-polytopes is $d$-rigid with $(d+1)$-locations, but not with $d$-locations.
\end{corollary}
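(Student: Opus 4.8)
The plan is to reduce Corollary~\ref{cor:2-nei} to Proposition~\ref{prop:stacked} and the remark immediately following it, via the single observation that a \emph{$2$-neighborly} $d$-polytope has a \emph{complete} $1$-skeleton. So, apart from the trivial lower bound, the whole statement becomes a one-line consequence of the fact that $K_n$ contains the graph of a stacked $d$-polytope on $n$ vertices as a spanning subgraph.

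First I would set up the reduction. Let $P$ be a $2$-neighborly $d$-polytope with vertex set $V$, $|V|=n$. By $2$-neighborliness every pair of vertices of $P$ spans an edge, so the $1$-skeleton of $P$ is $K_n$, and since $P$ is $d$-dimensional, $n\ge d+1$. Pick any $d+1$ of the vertices to span a $d$-simplex and repeatedly perform stacking operations (glue a $d$-simplex onto a $(d-1)$-face) until all $n$ vertices are used; this yields a stacked $d$-polytope $Q$ on vertex set $V$ whose $1$-skeleton $G_Q$ is a spanning subgraph of $K_n$, i.e. of the $1$-skeleton of $P$.

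Next I would invoke the positive part of Proposition~\ref{prop:stacked}, fixing once and for all a generic $A\subseteq\mathbb{R}^d$ with $|A|=d+1$. Its inductive proof needs only $d+1$ locations: every vertex introduced by a stacking has degree exactly $d$, so its $d$ neighbors occupy $d$ of the $d+1$ available locations and the last one is free for the new vertex. Hence there is $f\colon V\to A$ with $f(G_Q)$ infinitesimally $d$-rigid. Since $A$ is generic of size $d+1$, $f(V)=A$ affinely spans $\mathbb{R}^d$, so $f$ is also an embedding of the $1$-skeleton of $P$; and since adjoining edges to a framework never decreases the rank of the rigidity matrix while that rank is bounded above by $dn-\binom{d+1}{2}$ — a bound already attained by $f(G_Q)$ — the framework given by $f$ on the $1$-skeleton of $P$ is infinitesimally rigid as well. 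This is precisely the content of the remark after Proposition~\ref{prop:stacked}, so the family is $d$-rigid with $(d+1)$-locations. For the lower bound: any map into a set $A\subseteq\mathbb{R}^d$ with $|A|=d$ has image affinely spanning at most a $(d-1)$-dimensional flat, hence is not an embedding at all, so $d$ locations never suffice; the family is moreover nonempty and, for $d\ge 4$, infinite, e.g. the cyclic polytopes $C(n,d)$ are $\lfloor d/2\rfloor$-neighborly, hence $2$-neighborly, for every $n>d$. Combining the two bounds gives $c_d=d+1$.

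I expect no genuine obstacle here. The only points that warrant an explicit sentence are (a) that the stacking induction of Proposition~\ref{prop:stacked} really does fit inside $d+1$ locations, and (b) the monotonicity-of-rank argument that upgrades infinitesimal rigidity of the spanning stacked subgraph to that of the full complete-skeleton framework — both already implicit in the cited proposition and its accompanying remark.
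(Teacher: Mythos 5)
Your proposal is correct and matches the paper's approach exactly: the paper derives Corollary~\ref{cor:2-nei} as an instance of the remark immediately preceding it, namely that Proposition~\ref{prop:stacked} extends to any $d$-polytope whose $1$-skeleton contains the graph of a stacked $d$-polytope as a spanning subgraph, which a complete $1$-skeleton trivially does. You have simply filled in the routine details (constructing the spanning stacked graph, the monotonicity of rigidity-matrix rank under adding edges, and the affine-span lower bound) that the paper leaves implicit.
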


Next we consider the subfamily $Q\subseteq F(S^2)$ of graphs that can be reduced to the complete graph $K_4$ by always contracting vertices of degree $\le 4$, obtaining a smaller triangulation of the $2$-sphere at each step.
\begin{proposition}
$c_3(Q)\le 10$.

In fact, any generic subset $A$ of $\R^3$ of size $10$ satisfies that for any graph $G\in Q$
there exists a function $f_G:\ V(G)\rightarrow A$ such that

(R') the framework $f_G(G)$ is infinitesimally $3$-rigid, and

(C') for any subgraph $H\subseteq G$ of a subcomplex that triangulates a $4$-gon, the restriction of $f_G$ to $V(H)$ is injective.
Equivalently, for any subcomplex which is a disc consisting of $2$ triangles, $f_G$ is injective on its vertices.
\end{proposition}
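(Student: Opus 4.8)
\emph{Plan.} The plan is to run the same induction as in the proof of Theorem~\ref{thm:24}, on $|V(G)|$, but to exploit that graphs in $Q$ are only ever reduced by contracting vertices of degree $3$ or $4$. This has two effects: it keeps the combinatorial overhead needed for (C$'$) small, and --- crucially --- it makes the rigidity obstruction at the reintroduced vertex a polynomial of degree at most $2$ rather than of degree equal to the vertex degree, so that $\binom{b+3}{3}$ in Proposition~\ref{prop:key} gets replaced by $\binom{2+3}{3}=10$. The base case $|V(G)|\le 10$ (in particular $G=K_4$) is immediate: map $V(G)$ injectively into $A$; a generic realization of a generically rigid graph is infinitesimally rigid, and injectivity trivially gives (C$'$). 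For the inductive step, take $|V(G)|>10$; by definition of $Q$ there is a vertex $v$ with $\deg_G(v)\in\{3,4\}$ and a neighbour $u$ with $G'=G/(vu)$ again a sphere triangulation lying in $Q$. Choose $f_{G'}:V(G')\to A$ by induction and keep $f_G=f_{G'}$ off $v$; we must place $f_G(v)\in A$ so that (R$'$) and (C$'$) hold for $G$.

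\emph{Property (C$'$), and the degree-$3$ case.} A $2$-triangle disc of $G$ avoiding $v$ maps to a $2$-triangle disc of $G'$, so $f_G=f_{G'}$ is injective on it by induction; hence $f_G(v)$ only needs to differ from the $f_G$-values on the vertices of the $2$-triangle discs that \emph{contain} $v$, namely on $\lk_G(v)$ and on the at most $\deg_G(v)$ vertices opposite the edges of $\lk_G(v)$ --- at most $8$ forbidden values. If $\deg_G(v)=3$, then $\lk_G(v)=\{a,b,c\}$ is a triangle and $E(G)=E(G')\sqcup\{va,vb,vc\}$, so $f_G(G)$ is obtained from the infinitesimally rigid $f_{G'}(G')$ by gluing the rigid simplex on $\{v,a,b,c\}$ along the face $\{a,b,c\}$; this is infinitesimally rigid whenever $f_G(v)\notin\mathrm{aff}\{f(a),f(b),f(c)\}$, which for generic $A$ holds once $f_G(v)\notin\{f(a),f(b),f(c)\}$, already excluded by (C$'$). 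So the degree-$3$ step imposes nothing new and is safely within $10$ locations.

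\emph{The degree-$4$ case.} Write $\lk_G(v)=a\text{--}b\text{--}c\text{--}d\text{--}a$. Since the link $4$-cycle can carry at most one of its two diagonals as an edge of $G$, after relabelling we may take $u=a$ with $ac\notin E(G)$, so $G'=(G-v)+ac$ and $G=(G'-ac)+v+\{va,vb,vc,vd\}$. As $G'$ is a sphere triangulation, $f_{G'}(G')$ is isostatic, so $f_{G'}(G'-ac)$ has a one-dimensional space of infinitesimal flexes, spanned by some $\dot q$ (which moves $a$ relative to $c$); and $G$ is infinitesimally rigid iff $\dot q$ does \emph{not} extend to a velocity of $v$, i.e. iff the $4\times 4$ matrix $M(p)$ with rows $\bigl((p-f(w))^{\mathsf T}\ \big|\ \langle\dot q_w,\,p-f(w)\rangle\bigr)$, $w\in\{a,b,c,d\}$, is nonsingular at $p=f_G(v)$. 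By the Contraction Lemma (Proposition~\ref{prop:WhiteleylikeContraction}, whose hypothesis (ii) follows from (C$'$) for $f_{G'}$ applied to the $2$-triangle disc $\{a,b,c\}\cup\{a,c,d\}$ of $G'$), there is a $p^{\ast}\in\R^3$ with $\det M(p^{\ast})\ne 0$, so $Q_v(p):=\det M(p)$ is a nonzero polynomial. The key point is $\deg Q_v\le 2$: each of the first three columns of $M(p)$ equals $p_i\,\mathbf 1$ minus a constant vector ($\mathbf 1$ the all-ones vector), so in the multilinear expansion of $\det M(p)$ any term in which two of these columns contribute their $p_i\mathbf 1$-part vanishes (two equal columns); hence at most one of the first three columns, together with the last column (also affine in $p$), can contribute degree, giving $\deg Q_v\le 2$. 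Thus $Q_v$ lies in the $10$-dimensional space of polynomials of degree $\le 2$ on $\R^3$.

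\emph{Conclusion and the main obstacle.} Now argue exactly as in the proof of Proposition~\ref{prop:key}, but with the $10$ monomials of degree $\le 2$ in place of the $\binom{b+3}{3}$ monomials: since $A$ is generic with $|A|=10$, the $10\times 10$ evaluation matrix of these monomials at the points of $A$ is nonsingular, so the nonzero $Q_v$ cannot vanish at all points of $A$; hence $f_G(G)$ is infinitesimally rigid for at least one $f_G(v)\in A$. The delicate part, and the main obstacle, is to check this choice can also be taken to satisfy (C$'$): the structurally forced zeros of $Q_v$ in $A$ are the link vertices $f(a),f(b),f(c),f(d)$ (placing $v$ on a neighbour kills an edge), which are already among the $\le 8$ values excluded by (C$'$); and for generic $A$ there is no further zero of $Q_v$ in $A$, since placing $v$ at the location of a non-neighbour $w$ is generically harmless --- here one applies algebraic independence of the coordinates of $A$ to the nonzero polynomial obtained by specialising $Q_v$ at $p=f(w)$, for each relevant $w$. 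Combining these, the locations forbidden for $f_G(v)$ are at most those imposed by (C$'$), fewer than $10$, so $|A|=10$ suffices and $c_3(Q)\le 10$. The two places where care is genuinely needed are the cancellation yielding $\deg Q_v\le 2$ and this final bookkeeping reconciling the rigidity obstruction with the injectivity constraints.
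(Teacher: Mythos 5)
Your reduction to the degree-$4$ case and the observation that the obstruction polynomial $Q_v=\det M(p)$ has degree at most $2$ are both correct, and the cancellation argument via the $p_i\mathbf{1}$-columns is a nice refinement of Proposition~\ref{prop:key} (for degree-$b$ vertices it lowers the polynomial degree from $b$ to $2$ whenever the contracted graph is isostatic). But the final step, which you yourself flag as ``the main obstacle,'' contains a genuine gap: to land on $|A|=10$ you need the set of ``bad'' locations $\{p\in A : Q_v(p)=0\}$ to be \emph{contained} in the $8$-element (C$'$)-forbidden set $N$, and for this you assert that ``placing $v$ at the location of a non-neighbour $w$ is generically harmless,'' i.e.\ that the specialised polynomial $Q_v(f(w))$ is not identically zero. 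That assertion is false in general: take $G$ the octahedron, $v$ a vertex, and $w$ its antipode (a non-neighbour). Identifying $f(v)=f(w)$ collapses $G$ to the wheel $W_4$, which has $8<3\cdot 5-6$ edges, so the framework acquires a nontrivial flex and $Q_v(f(w))\equiv 0$. In this particular example $w$ happens to lie in $N$, but nothing in your argument explains \emph{why} the bad non-neighbours always lie in $N$ for a general $G\in Q$, and that is precisely what must be proved. Without it the degree-$2$ bound alone (run verbatim through Proposition~\ref{prop:key}) only shows at most $9$ bad points in $A$, which together with the $\le 8$ points of $N$ gives roughly $|A|\ge 18$, not $10$.

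The paper's proof closes exactly this gap by a different device, which your argument does not reproduce: instead of trying to locate the zeros of $Q_v$, it takes \emph{any} two points $x,y\in A\setminus N$ and shows directly that they cannot both be bad. If both were bad, the unique nontrivial flex $M_{G'}$ of $G-v=G'-ac$ (here $ac$ is the diagonal added by the contraction) would extend to $v$ at both $x$ and $y$, and pasting these two extensions onto the common equator $f(\lk_v(G))$ would yield a nontrivial infinitesimal flex of the octahedron with antipodal pair $\{x,y\}$ and equator $f(\lk_v(G))$ --- contradicting Gluck's theorem, since these six points are distinct (by (C$'$) and $x,y\notin N$) and lie in the generic set $A$. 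Thus at most one point of $A\setminus N$ is bad, and $|A\setminus N|\ge 2$ finishes the step. I would suggest replacing your ``generically harmless'' paragraph with this octahedron argument; alternatively, keep your degree-$2$ computation as an independent remark, but note explicitly that it only yields $c_3(Q)\le 18$ (or $14$ after subtracting the four link-vertex zeros) and that the sharper constant $10$ requires the Gluck step.
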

\begin{proof}
Contract a vertex $v\in G$ of degree $\le 4$ to vertex $u$ so that $G'$ is in $Q$.
Again, we extend $f_{G'}$ to $f_G$.
For $(C')$ to hold for $f_G$, $f_G(v)$ needs to avoid a subset $N$ of at most $8$ points in $A$.

To achieve also $(R')$, we show that for any two points $x,y\in A\setminus N$, either $f_G(v)=x$
or $f_G(v)=y$ provides the desired extension.

Note that removing an edge from $G'$ creates
a nontrivial infinitesimal motion, unique up to scaling,
say $M_{G'}:V(G')\rightarrow\mathbb{R}^3$, so for the right choice of an edge $e$ this is the nontrivial motion for the induced framework of the graph $G-v$. Then $M_{G'}$ does not preserve the distance between the vertices of $e$ up to first order. Note that the vertices of $e$ belong to $\lk_v(G)$.

Tentatively define $f_G(v)=x$; it may allow an extension of the motion $M_{G'}$ on $G'$ to $G$ for suitable $M_G(v)=v_x$. Similarly when tentatively defining $f_G(v)=y$ and $M_G(v)=v_y$. Assume by contradiction that both options extend the motion $M_{G'}$. As $M_{G'}$ is unique (up to nonzero scalar multiplication), we will get a nontrivial infinitesimal motion $M$ on the octahedron $O$ with antipodal vertices $x,y$ and equator $4$-cycle $f_G(\lk_v(G))$, by setting $M(x)=v_x, M(y)=v_y$ and $M(f_G(u))=M_{G'}(u)$ for any $u\in \lk_v(G)$.
As all $6$ vertices of $O$ are in different points of $\mathbb{R}^3$, and as $A$ is generic, by Gluck's theorem $O$ is infinitesimally rigid; a contradiction.
\end{proof}

\section{Graphs of surfaces}\label{sec:surfaces}
Barnette and Edelson~\cite{BE88,BE89} have shown that for any given compact surface $M$, the number of its \emph{irreducible} triangulations, namely those where no edge can be contracted to give a (smaller) triangulation of $M$, is finite. More strongly, and useful for our purposes, Schipper~\cite{Schipper91} has shown the following, using the Barnette-Edelson results.
\begin{lemma}(\cite[Lem.9]{Schipper91})\label{lem:Schipper}
For any compact $2$-manifold $M$ there exists a constant $n_0(M)\in \mathbb{N}$ such that any triangulation $\Delta$ of $M$, with $n>n_0(M)$ vertices, contains a vertex $v$ of degree at most $6$ such that
$v$ has a neighbor $u$ such that the contraction of $v$ to $u$ results in a (smaller) triangulation $\Delta'$ of $M$; equivalently, $vu$ is in no missing triangle of $\Delta$.
\end{lemma}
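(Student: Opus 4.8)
The plan is to produce $n_0(M)$ by combining two ingredients: Euler's formula, which forces an abundance of low-degree vertices, and the Barnette--Edelson finiteness theorem, linked together by an edge-contraction argument.

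\emph{Step 1: many low-degree vertices.} For a triangulation $\Delta$ of $M$ with $n$ vertices one has $2|\Delta_1|=3|\Delta_2|$ and $n-|\Delta_1|+|\Delta_2|=\chi(M)$, hence $|\Delta_1|=3n-3\chi(M)$ and $\sum_{v}\deg(v)=6n-6\chi(M)$. Since $\deg(v)\ge 3$ for every vertex, writing $b$ for the number of vertices of degree $\ge 7$ gives $6n-6\chi(M)\ge 3(n-b)+7b$, so the number of vertices of degree $\le 6$ is at least $n-b\ge\frac14\bigl(n+6\chi(M)\bigr)\to\infty$. Moreover a vertex $v$ of degree $3$ is automatically contractible once $n\ge 5$: for any $u\in\lk_v(\Delta)$ the only common neighbours of $v$ and $u$ are the two neighbours of $u$ on the triangle $\lk_v(\Delta)$, so $vu$ lies in no missing triangle. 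Thus we may assume $\Delta$ has minimum degree $\ge 4$, and it still has $\Omega(n)$ vertices of degree in $\{4,5,6\}$.

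\emph{Step 2: finitely many irreducible triangulations.} By Barnette and Edelson~\cite{BE88,BE89} there are only finitely many irreducible triangulations of $M$; let $n_1=n_1(M)$ bound the number of their vertices. In particular every triangulation of $M$ with more than $n_1$ vertices has at least one contractible edge.

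\emph{Step 3: contraction, and the main obstacle.} Suppose, for contradiction, that $n$ is large and no degree-$\le 6$ vertex of $\Delta$ has a contractible incident edge; then every contractible edge of $\Delta$ joins two vertices of degree $\ge 7$. The key point is that contracting such a ``high--high'' edge $xy$ strictly decreases the number of vertices of degree $\ge 7$: the merged vertex has degree $\deg(x)+\deg(y)-4\ge 10$, and a contraction never raises any other degree, so two high vertices are removed and only one is created. Starting from $\Delta$ and repeatedly contracting high--high edges we therefore reach, after at most $b\le\frac14(3n-6\chi(M))$ steps, a triangulation $\Delta^\ast$ of $M$ with at least $\frac14(n+6\chi(M))$ vertices and no high--high contractible edge. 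If $\Delta^\ast$ is irreducible then $\frac14(n+6\chi(M))\le n_1$, which bounds $n$ and yields the desired $n_0(M)$ by contradiction; otherwise $\Delta^\ast$ has a contractible edge incident to a vertex of degree $\le 6$ in $\Delta^\ast$.

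In the second case one must descend this information from $\Delta^\ast$ back to the original $\Delta$, and this is where the real work lies. A contraction never creates a missing triangle among vertices it leaves untouched, so a missing triangle of $\Delta$ through an edge at a vertex $v$ can disappear under the reduction only if it meets one of the vertices actually contracted; hence a degree-$\le 6$ vertex that is non-contractible in $\Delta$ but contractible later must have had all its incident missing triangles pass through such ``special'' vertices. Weighing the number of low-degree vertices to which this can happen --- controlled through the Barnette--Edelson structural analysis of (nearly) irreducible triangulations, e.g.\ the absence of degree-$3$ vertices and the boundedness of the number of degree-$4$ vertices, each of which carries a copy of $K_5$ consuming Euler genus --- against the linear supply of low-degree vertices from Step~1 shows that, for $n$ past some bound, $\Delta$ itself already has a degree-$\le 6$ vertex with a contractible incident edge. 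This bookkeeping, which produces the explicit $n_0(M)$, is exactly what is carried out in Schipper~\cite[Lem.~9]{Schipper91} on the basis of~\cite{BE88,BE89}, and it is the step I expect to be the main difficulty; Steps~1 and~2 are routine.
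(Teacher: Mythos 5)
Your Steps~1 and~2 are correct and are indeed the right raw ingredients: the Euler-formula count shows the number of vertices of degree $\le 6$ is at least $\tfrac14(n+6\chi(M))$ (this is essentially the ``$\epsilon n$'' observation in Remark~\ref{rem:n_0(g)}), and Barnette--Edelson supplies the finiteness of irreducible triangulations. The degree-$3$ observation and the degree count $\deg(x)+\deg(y)-4$ for the merged vertex are also fine. Note, for calibration, that the paper itself does not prove this lemma at all: it is quoted verbatim as \cite[Lem.~9]{Schipper91}, so a bare citation would have matched the paper's treatment.

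The problem is that your Step~3 does not close. After contracting high--high contractible edges you obtain $\Delta^\ast$ with a contractible edge at a low-degree vertex $v^\ast$, and you must transfer this back to $\Delta$; but (i) degrees of surviving vertices only \emph{decrease} under contraction, so $v^\ast$ having degree $\le 6$ in $\Delta^\ast$ gives no upper bound on its degree in $\Delta$, and (ii) contractibility does not pull back: a missing triangle of $\Delta$ through $v^\ast u^\ast$ whose third vertex is absorbed by one of your contractions can cease to be an obstruction in $\Delta^\ast$, so $v^\ast u^\ast$ contractible in $\Delta^\ast$ does not make it contractible in $\Delta$. You acknowledge this and then assert that the required ``bookkeeping'' is ``exactly what is carried out in Schipper \cite[Lem.~9]{Schipper91}'' --- i.e., the step you leave unproved is precisely the statement to be proved. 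The appeals to ``the Barnette--Edelson structural analysis,'' the claimed absence of degree-$3$ vertices, and the bound on degree-$4$ vertices via copies of $K_5$ are not established or even precisely stated. As written, the proposal is an informed plan plus a citation to the result itself, not a proof; either carry out the descent argument quantitatively or simply cite Schipper as the paper does.
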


\begin{remark}~\label{rem:n_0(g)}
In fact, the argument in Schipper's proof shows more: for some $0<\epsilon <1$, independent of $M$, at least $\epsilon n$ such vertices $v$ exist. However, $\{n_0(M)\}_M$ is of course unbounded, as the minimal number of vertices needed to triangulate a surface grows with the genus.
\end{remark}

With Lemma~\ref{lem:Schipper} at hand, we can prove Theorem~\ref{thm:S_g} in the same spirit of the
proof of Theorem~\ref{thm:24}.

Let $\Delta_{\leq 1}$ denote the $1$-skeleton of a triangulation $\Delta$.
 Let $n>n_0(M)$ and contract $v$ to $u$ as in Lemma~\ref{lem:Schipper}. We need to verify that infinitesimal rigidity of the framework $f_{\Delta'_{\le 1}}(\Delta'_{\le 1})$ implies the existence of an extension $f_{\Delta_{\le 1}}(v)\in A$ that makes $f_{\Delta_{\le 1}}(\Delta_{\le 1})$ infinitesimally rigid. For this we first use Proposition~\ref{prop:WhiteleylikeContraction}; in order to apply it we need the following condition to hold, which guarantees that condition (ii) in the proposition holds:

(C") any $2$-ball $B$ in $\Delta'$ made of at most $4$ triangles with a common vertex has $f_{\Delta'_{\le 1}}$ injective on the vertices of $B$.

(Indeed,
the contraction of $v$ as in Lemma~\ref{lem:Schipper},
replaces its star by such ball $B$.)
For (C") to hold for $f_{\Delta_{\le 1}}(\Delta_{\le 1})$, $f_{\Delta_{\le 1}}(v)$ needs to avoid at most $48=6+6+12+24$ values in $A$.

Now, by Propositions~\ref{prop:WhiteleylikeContraction} and~\ref{prop:key} for $b=6$ we finish the proof as before:
among any $\binom{9}{3}=84$ points in $A$, there will be a point $a$ such that $f_{\Delta_{\le 1}}(v)=a$ makes the rigidity matrix of $\Delta_{\le 1}$ full rank.
Thus, by avoiding at most $48+83=131$ values in $A$, both (R) and (C") are guaranteed, given $n>n_0(M)$.
Thus, $c_3(F(g))\le \max(n_0(M),132)$.  $\square$

Note that when the (orientable or non-orientable) genus of $M$ tends to infinity, so does $n_0(M)$ (see Remark~\ref{rem:n_0(g)}). We do not know if $\{c_3(F(g))\}_g$ is bounded.

\section{Concluding remarks}\label{sec:conclude}
Regarding simplicial spheres, does a higher dimensional analog of Theorem~\ref{thm:2-spheres} hold? Namely,
\begin{problem}
Let $F(S^{d-1})$ be the family of $1$-skeleta of triangulations of the $(d-1)$-sphere. For $d>3$, is $F(S^{d-1})$ $d$-rigid with $c$-locations for some finite $c$? Namely, is $c_d(F(S^{d-1}))$ finite?
\end{problem}



Regarding surfaces, does a uniform bound in Theorem~\ref{thm:S_g} hold? Namely, for $F(S)$, the family of all graphs of compact connected surfaces, we ask:
\begin{problem}
Is $c_3(F(S))$ finite?
\end{problem}
We remark that for the larger family of Fogelsanger's minimal cycle complexes~\cite{Fogelsanger}, and even for the intermediate family which still contains $F(S)$, of complexes minimal with respect to containment among those supported by homology $2$-cycles, rigidity with few locations fails:
\begin{theorem}
There is no finite set of locations in $\R^3$ such that every minimal $2$-cycle can be realized with vertices in these locations in an infinitesimally rigid way.
\end{theorem}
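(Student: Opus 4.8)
The plan is to mimic the first proof of Theorem~\ref{thm:no_c(F_d)}: assume for contradiction that some finite set $A\subseteq\R^3$ of size $c$ suffices for all minimal $2$-cycles to be realized infinitesimally rigidly, and then build a minimal $2$-cycle $\mu''$ which contains the $1$-skeleton of a fixed ``bad'' cycle $\mu$ (one that genuinely requires $c$ locations) as a subgraph, but which also has arbitrarily many vertices, and which has a planar quadruple of vertices whose image can always be assigned a nontrivial perpendicular motion. First I would take a minimal $2$-cycle $\mu$ with $c_3(\{\mu\})=c$ (this exists since $c$ is assumed minimal over the family, so some member attains it), noting that adding more vertices only makes the required number of locations no smaller. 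The obstacle is that, unlike the Laman case, we cannot simply perform a ``Henneberg $0$-extension'' (coning a vertex over a $d$-subset) and stay inside the class of minimal $2$-cycles — a minimal $2$-cycle is a pure $2$-dimensional complex with a unique top homology class, so any gadget we attach must be glued along a $2$-cycle (a triangle) and must not create or destroy homology.

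The gadget I would use is the one sketched in the \texttt{iffalse} block: take the boundary of a tetrahedron $\Delta$ on vertices $0,1,2,3$, and subdivide it into $\Delta'$ by bisecting the edges of the triangle $\{123\}$ and of the three triangles containing $0$, inserting a small triangle $\Gamma$ parallel to $\{123\}$; this $\Delta'$ has $7$ vertices and $10$ triangles and is still a triangulated $2$-sphere, hence a minimal $2$-cycle. The key point of the construction is that $\Delta'$ contains the ``apex'' vertex $w$ (the copy of $0$), and the four vertices $w,1,2,3$ bound the star of $w$, which is a $2$-ball made of small triangles; more importantly, in the full $\Delta$ the vertex $0$ together with $1,2,3$ is affinely related, and after subdivision $w$ still sits in the affine hull of the three original vertices it is ``over''. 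Then, for each unordered triple $T=\{p,q,r\}$ of vertices of $\mu$ (ordered totally to make the construction canonical), I would attach a fresh copy of $\Delta'$ by identifying its $\{1,2,3\}$ with $T$, delete the triangle $\Gamma$ from this copy and some triangle of the complex built so far, and connect the two resulting boundary circles by a simplicial cylinder (an annulus triangulated with no new vertices beyond those on the two boundary triangles, or with a bounded number of new vertices). Each such step keeps the complex a minimal $2$-cycle: gluing a triangulated sphere along a triangle, then cutting two triangles and reconnecting by an annulus, changes neither the property of being supported by a single homology $2$-cycle nor minimality, because the annulus carries no homology and the sphere's class gets absorbed — this is exactly Fogelsanger's connected-sum/tube bookkeeping and should be checked carefully but is routine.

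The induction on the number of triples of $\mu$ then produces $\mu''$ with strictly more vertices than $\mu$ while still containing $\mu_{\le 1}$ as a subgraph, so $\mu''$ also requires $c$ locations. Now repeat the whole process starting from $\mu''$ in place of $\mu$; after finitely many rounds we obtain a minimal $2$-cycle $\nu$ with more than $c$ vertices that still requires $c$ locations, yet by construction $\nu$ contains, for \emph{every} triple $T$ of its vertices, an attached copy of $\Delta'$ with an apex $w=w(T)$ such that $w$ lies in the affine hull of $f(T)$ for any map $f:V(\nu)\to A$ — and since $|A|=c<|V(\nu)|$, some triple $T_0$ is mapped by $f$ into at most two distinct points, forcing $f(T_0)$ together with $f(w(T_0))$ to lie in a common plane $P$. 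Assigning $w(T_0)$ a nonzero velocity perpendicular to $P$ and zero velocity to all other vertices satisfies \eqref{eq:infi-rigidity} on every edge incident to $w(T_0)$ (those edges go to vertices of $T_0\subseteq P$ and to the few subdivision vertices of that copy of $\Delta'$, which I would arrange also to lie in $P$ by choosing the subdivision combinatorially so that the neighbors of $w$ are exactly among $f(T_0)$-points, or by including those neighbors in the ``flat'' part), giving a nontrivial infinitesimal motion of $f(\nu)$ — contradiction. The main obstacle I anticipate is purely the bookkeeping of the gadget: verifying that $\Delta'$ with $\Gamma$ removed and tubed in still yields a minimal $2$-cycle after iteration, and arranging the subdivision so that the apex $w$ has \emph{all} its graph-neighbors inside the planar quadruple (or at worst coplanar with it); once that local picture is pinned down, the rest is the same perpendicular-motion trick as in Theorem~\ref{thm:no_c(F_d)}.
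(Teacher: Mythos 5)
Your approach is the same as the paper's, and the high-level plan (attach a gadget per triple so that an apex becomes a ``flexible'' vertex once a pigeonhole collision occurs, then apply the perpendicular-velocity trick from the first proof of Theorem~\ref{thm:no_c(F_d)}) is correct. However there is a concrete gap in the gadget you chose, and you partially acknowledge it but do not resolve it. The midpoint-bisection subdivision $\Delta'$ (the one from the commented-out \texttt{iffalse} block) does \emph{not} leave the apex $0$ with neighbors exactly $\{1,2,3\}$: after bisecting the edges of $\{123\}$ you must also retriangulate the three triangles $\{012\},\{013\},\{023\}$ (since their boundary edges now carry midpoints), and in the resulting complex the vertex $0$ is adjacent to the three new midpoints as well, giving it degree $6$. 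These new vertices receive $f$-values in $A$ that you do not control, so the coplanarity of $f(w)$ with its neighborhood is \emph{not} forced by a pigeonhole collision on $T$ alone, and the perpendicular velocity at $w$ will generically fail the edge constraints to those new neighbors. Your parenthetical remark that $w$ ``still sits in the affine hull of the three original vertices'' confuses the geometric tetrahedron with the abstract complex: the locations are whatever $f$ assigns, not a prescribed affine embedding.

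The paper's proof uses a different subdivision precisely to avoid this: the three new vertices (forming $\Gamma$) are placed in the \emph{interior} of the triangle $\{123\}$, so the three triangles incident to $0$ are untouched and $0$ keeps degree exactly $3$ with link $\{1,2,3\}$. Then, once the pigeonhole argument produces a triple $T$ whose image is degenerate (at most one location in the paper with $|V|>2c$; at most two locations, hence at most three points in $T\cup\{w\}$, would also suffice since any three points are coplanar), the apex $w$ has all its graph-neighbors inside $f(T)$, and the nontrivial perpendicular motion goes through. You gesture at exactly this fix (``choosing the subdivision combinatorially so that the neighbors of $w$ are exactly among $f(T_0)$-points''), but you commit to the incompatible edge-bisection gadget in the body of the proof, and your alternative fix of ``including those neighbors in the flat part'' cannot be arranged, since the new vertices' locations come from $A$ and are not under your control. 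Replacing your $\Delta'$ with the paper's interior subdivision closes the gap; the rest of your argument (preserving minimal-$2$-cycle-ness under gluing a sphere along a triangle and tubing, and the pigeonhole iteration) matches the paper.
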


\begin{proof}
The proof follows the idea in the first proof of Theorem~\ref{thm:no_c(F_d)}.

Indeed, assume that $c$ locations are enough to guarantee that every minimal $2$-cycle can be realized  in an infinitesimally rigid way. Consider a minimal $2$-cycle $\mu$ that requires $c$ locations.

Consider secondly the boundary of a tetrahedron $\Delta$, and mark the four vertices by $0,1,2$ and $3$. We subdivide the triangle $\{123\}$ in some way that
introduces exactly 3 new vertices, all in its interior, which form
a triangle $\Gamma$ (in the interior of the triangle $\{123\}$). Denote the resulting subdivision of $\Delta$ by $\Delta'$.

Totally order the triples of vertices in $\mu$. According to this order,
for every triple of vertices of $\mu$, attach a new copy of $\Delta'$ along the vertices $1$, $2$ and $3$ to the currently constructed complex $\mu'$ (starting with the original $\mu$), then remove $\Gamma$ and some triangle of $\mu'$, and connect both along a simplicial tube.

The resulted complex $\mu''$ is also a minimal $2$-cycle, and contains the $1$-skeleton of $\mu$, thus also requires $c$ locations; however it has more vertices. Repeating this process, we can assume $\mu$ has more than $2c$ vertices, so it has a triple of vertices $T$ occupying at most one location, and with the copy $w$ of $0\in \Delta$ corresponding to $T$, all $4$ vertices $T\cup\{w\}$ are contained in an affine plane $P$. Now, in $\mu''$, assign $w$ a nonzero velocity perpendicular to $P$, and zero velocity to all other vertices; this is a nontrivial infinitesimal motion on the $1$-skeleton of $\mu''$, a contradiction.
\end{proof}

Regarding Theorem~\ref{thm:no_c(F_d)}, while the phenomenon of rigidity with few locations does not hold for Laman graphs in the plane, Walter Whiteley asked us whether it does hold for the subfamily of planar Laman graphs, see~\cite[Thm.1]{PlanarLaman2005} for an equivalent characterization via pointed pseudo-triangulations.
We leave this question open.
\begin{problem}
Let $F$ be the family of planar Laman graphs. Is $c_2(F)$ finite?
\end{problem}

Given that rigidity with few locations holds, it is interesting to find optimal bounds. For $F(S^2)$, note that the vertices of the octahedron must occupy 6 different locations in $\R^3$ in any infinitesimally rigid embedding, thus $c_3(F(S^2))\ge 6$.
\begin{problem}\label{prob:PlanarLaman}
For every surface $M$, what is the value $c_3(F(M))$? At least, find improved bounds.
\end{problem}

In this note we considered infinitesimal rigidity with $c$-locations, for $c$ a constant. More generally, for a family $F$ of generically $d$-rigid graphs, let $F(n)$ be the subfamily of graphs in $F$ with at most $n$ vertices, and let $c_{d,F}(n)$ be the minimum $c$ such that $F(n)$ is $d$-rigid with $c$-locations. One can study the growth of the function $c_{d,F}(n)$.

\begin{problem}
For $F_3$, the family of all generically $3$-rigid graphs, what is the asymptotic growth of $c_{3,F_3}(n)$? Is it sublinear in $n$?
\end{problem}
Let us remark that for $G$ a minimally $d$-rigid graph, the chromatic number $\chi(G)$ of $G$ is a lower bound for $c_{d,F_d}(n)$, as infinitesimal $d$-rigidity forces the vertices of any edge to occupy two different locations. However, as any induced subgraph $G'=(V',E')$ of $G$ supports no nontrivial stress, $G'$ must satisfy $|E'|\leq d|V'|$, hence $G$ is $(2d-1)$-degenerate, so $\chi(G)\leq 2d$, and we get no growth with $n$ in the lower bound on $c_{d,F_d}(n)$ by using the chromatic number.

\section*{Acknowledgments}
We thank Isabella Novik and Orit Raz for helpful discussions, and Bob Connelly, Louis Theran and Walter Whiteley for helpful comments and references.
We thank the referee for valuable advice, resulted in greatly improved presentation.
K.A. was supported by ERC StG 716424 - CASe and ISF Grant 1050/16.
E.N. was partially supported by ISF grant 1695/15, by ISF-NRF joint grant 2528/16 and by ISF-BSF joint grant 2016288.

\newcommand{\etalchar}[1]{$^{#1}$}
\providecommand{\bysame}{\leavevmode\hbox to3em{\hrulefill}\thinspace}
\providecommand{\MR}{\relax\ifhmode\unskip\space\fi MR }
\providecommand{\MRhref}[2]{%
  \href{http://www.ams.org/mathscinet-getitem?mr=#1}{#2}
}
\providecommand{\href}[2]{#2}

\end{document}